%%G. Czedli 2023
\documentclass[reqno]{amsart}
\newcommand \datum {November 7, 2023}
%\setlength{\hoffset}{-4cm}

%%%%%%%%%%%%%%%%%%%%%%%%%%%%%%%%%%%%%%%%%%%%%%%

\usepackage{amssymb,latexsym}
\usepackage{amsmath}
\usepackage{hyperref}
\usepackage{url}
\usepackage{graphicx}
\usepackage[dvipsnames]{xcolor}
\usepackage{enumerate}
\usepackage{tikz, color}
\numberwithin{equation}{section}
\theoremstyle{plain}
 \newtheorem{theorem}{Theorem}[section]
  
 \newtheorem{lemma}[theorem]{Lemma}
 \newtheorem{proposition}[theorem]{Proposition}
 
 \newtheorem{observation}[theorem]{Observation} 
 
 \newtheorem{corollary}[theorem]{Corollary}
\theoremstyle{definition}
 \newtheorem{definition}[theorem]{Definition}
 
 \newtheorem{example}[theorem]{Example}
 \newtheorem{conjecture}[theorem]{Conjecture} \newtheorem{remark}[theorem]{Remark}
\theoremstyle{remark}

% Personal macros

\newcommand  \FSP[3]{\textup{FSP}(#1,#2,#3)}
\newcommand  \fprab[4]{f_{#2,#3,#4}^{(#1)}}
\newcommand  \frab[3]{f_{#1,#2,#3}}
\newcommand  \flatrab[3]{f_{#1,#2,#3}^\flat}
\newcommand \FD[1] {\textup{FD}(#1)}
\newcommand \crn {W_3}
\newcommand \icrn[1]{W_3^{(#1)}}
\newcommand \inseg[2] {\textup{Is}(#1,#2)}
\newcommand \pset[1] {\textup{Ps}(#1)}
\newcommand \bul[1] {#1^\bullet}
\newcommand \auxf {\fhb}
\newcommand\gap{13} 
\newcommand\stp{1.5}
\newcommand\bstp{2}
\newcommand\hos{3.5}
\newcommand\lntxt{1.5}
\newcommand\rntxt{1.5}
\newcommand \bcirc[1] {\fill[black] (#1) circle (5pt);
  \draw [black,thick] (#1)  circle [radius=5pt]}
\newcommand \wcirc[1] {\fill[white] (#1) circle (5pt);
  \draw [black,thick] (#1)  circle [radius=5pt]}
\newcommand \nodetextl[2] {\draw [white,fill] (#1) circle [radius=0.0pt] node [black,left=\lntxt] {#2} } 
\newcommand \nodetextr[2] {\draw [white,fill] (#1) circle [radius=0.0pt] node [black,right=\rntxt] {#2} } 
\newcommand \nodetextb[2] {\draw [white,fill] (#1) circle [radius=0.0pt] node [black,below=\rntxt] {#2} } 
\newcommand \nodetexta[2] {\draw [white,fill] (#1) circle [radius=0.0pt] node [black,above=\rntxt] {#2} } 

\newcommand \nodeutxt[2] {\draw [white,fill] (#1) circle [radius=0.0pt] node [black,above=-1pt] {#2} } 
\newcommand \nodeuutxt[2] {\draw [white,fill] (#1) circle [radius=0.0pt] node [black,above=1pt] {#2} }

\newcommand \nodeelltxt[2] {\draw [white,fill] (#1) circle [radius=0.0pt] node [black,above=-6.5pt] {\tbf{#2}}} 
\newcommand \nodewhtxt[2] {\draw [white,fill] (#1) circle [radius=0.0pt] node [white,above=-6.5pt] {\tbf{#2}}}

\newcommand\hot{30pt}
\newcommand\vot{20pt}
\newcommand\het{\vot}
\newcommand\wit{50pt}
\newcommand\whh{94pt} % {80pt}

\newcommand \acr[1] {\raisebox{1pt}{\ul{#1}}}
\newcommand \ul[1] {\underline{#1}}
\newcommand \ibinom[2] {\textup C_{\textup{b}}(#1,#2)}
\newcommand \Pow [1]{\textup{Pow}(#1)}
\newcommand \nPow {\Pow{[n]}}
\newcommand \ntPow [1] {\textup{Pow}_{\textup{nt}}(#1)}
\newcommand\Sp {\textup{Sp}}
\newcommand\fsp {f_{\textup{Sp}}}
\newcommand\gmin [1]{\textup{Gm}(#1)}
\newcommand \Jir [1]{\textup{J}(#1)}
\newcommand \vs{\vec\sigma}
\newcommand\lint[1]{\lfloor #1\rfloor}
\newcommand\uint[1]{\lceil #1\rceil}
\renewcommand \phi{\varphi}
\newcommand \Nnul {{\mathbb N}_0}
\newcommand \Nplu {{\mathbb N^+}}
\newcommand \Nfromthree {{\mathbb N^{\geq 3}}} % ???
\newcommand \Nfromwhat[1]{{\mathbb N^{\geq #1}}}
\newcommand \sct[1]{\scriptstyle{#1}}

\newcommand \faux {f_{\textup{aux}}}
\newcommand \fgr[1] {g_{#1}}
\newcommand \fgha {g_{3}^{\ast}}
\newcommand \fghb {g_{3}^{\ast\ast}}
\newcommand \fha{f_{3,4}}
\newcommand \fhb{f_{3,3}}

\newcommand{\tbf}{\textbf}% text bold
\newcommand{\set}[1]{\{#1\}}% set 
\newcommand \Sym[1] {\textup{Sym}(#1)}

% Temporary macros
\newcommand \red[1]{{\textcolor{red}{#1}\color{black}}}

\begin{document}

\title[Generating the powers of  free distributive lattices]
{Minimum-sized generating sets of the direct powers of  free distributive lattices}

\author[G.\ Cz\'edli]{G\'abor Cz\'edli}
\email{czedli@math.u-szeged.hu}
\urladdr{http://www.math.u-szeged.hu/~czedli/}
\address{University of Szeged, Bolyai Institute. 
Szeged, Aradi v\'ertan\'uk tere 1, HUNGARY 6720}

\begin{abstract} 
For a finite lattice $L$, let Gm($L$) denote the least $n$ such that $L$ can be generated by $n$ elements. For integers $r>2$ and $k>1$,  denote by FD$(r)^k$   the $k$-th direct power of the free distributive lattice FD($r$) on $r$ generators.  We determine Gm(FD$(r)^k$)  for many pairs $(r,k)$ either exactly or with good accuracy by giving a lower estimate that becomes an upper estimate if we increase it by 1. 
% for it such that the difference between these two estimates is only 1. 
For example, for $(r,k)=(5,25\,000)$ and  $(r,k)=(20,\ 1.489\cdot 10^{1789})$, Gm(FD$(r)^k$) is 300 and 6000, respectively.  To reach our goal, we give estimates for the maximum number of pairwise unrelated copies of some specific posets (called full segment posets) in the subset lattice of an $n$-element set. 
In addition to analogous earlier results in lattice theory,  a connection with cryptology is also mentioned among the motivations. 
\end{abstract}

\dedicatory{Dedicated to the memory of George F.\ McNulty}

\thanks{This research was supported by the National Research, Development and Innovation Fund of Hungary, under funding scheme K 138892.  \hfill{\red{\tbf{\datum}}}}

\subjclass {05D05 and 06D99}

% 05D05 Extremal set theory
% 06D (1980-now) Distributive lattices

\keywords{Free distributive lattice, minimum-sized generating set, small generating set, direct power, Sperner theorem, 3-crown poset, cryptography}

\maketitle

\section{Introduction}\label{sect:intro}
This work belongs both to \emph{extremal combinatorics} and  \emph{lattice theory}. The paper is more or less self-contained; those familiar with  M.Sc.\ level mathematics and the concept of distributive lattices should not have  any difficulty in reading. 

The search for small generating sets has belonged to  lattice theory for long; for example, in chronological order, see Gelfand and Ponomarev \cite{gelfand}, Strietz \cite{strietz}, Z\'adori \cite{zadori1,zadori2}, Chajda and Cz\'edli \cite{ivanczg}, Tak\'ach \cite{takach}, Kulin \cite{kulin}, Cz\'edli and Oluoch \cite{czglo}, and Ahmed and Cz\'edli \cite{delbrinczg}. See also the  surveying parts and the bibliographic sections in \cite{delbrinczg} and Cz\'edli \cite{czgDEBRauth} for further references. If a large lattice $L$ can be generated by few elements, then this lattice has many small generating sets.  Cz\'edli \cite{czgDEBRauth} and \cite{czgboolegen} have recently observed that these lattices can be used for cryptography; for a further note on this topic, see Remark \ref{rem:kCpgCl}. 
This fact and the results on small generating sets of lattices   in the above-mentioned  and some additional papers constitute the \emph{lattice theoretic motivation} of the paper.  

There is a \emph{motivation} coming from \emph{extremal combinatorics}, too. The first result on the maximum number $\Sp(U,n)$ of pairwise unrelated (in other words, incomparable) copies of a poset $U$ in the powerset lattice of an $n$-element finite set was published by  Sperner \cite{sperner} ninety-five years ago. While $U$ is the singleton poset in Sperner's theorem, the \emph{Sperner theorem} (that is, the Sperner \emph{type} theorem) in Griggs, Stahl, and Trotter \cite{griggsatall} determines $\Sp(U,n)$ for any finite chain $U$. For some other finite posets, similar results were obtained by Katona and Nagy \cite{KatonaNagy} and Cz\'edli \cite{czgsp}. In general, the exact value of   $\Sp(U,n)$ is rarely known. On the other hand, Katona and Nagy \cite{KatonaNagy} and, independently from them, Dove and Griggs \cite{dovegriggs} determined the \emph{asymptotic} value of $\Sp(U,n)$. Their celebrated result asserts that for any finite poset $U$, 
\begin{equation}
\Sp(U,n)\sim \frac 1{|U|}\binom n{\lint{n/2}},\text{ that is, }\lim_{n\to\infty} \frac 1{|U|}\binom n{\lint{n/2}}\cdot \Sp(U,n)^{-1} = 1.
\label{eq:DGKNasymp}
\end{equation}

By the main result of \cite{czgsp}, the  lattice theoretic motivation and the combinatorial one are strongly connected; see \eqref{eq:zkRspHjTKc} later, which we are going to quote from  \cite{czgsp}. Here we only mention that in order to get closer to what the title of the paper promises, we need to determine $\Sp(U,n)$ for some rather special posets $U$. 

The asymptotic result  \eqref{eq:DGKNasymp} may suggest that for our special posets $U$, we can obtain $\Sp(U,n)$ or at least some of its estimates simply by copying what  Dove and Griggs \cite{dovegriggs} or Katona and Nagy \cite{KatonaNagy} did. However, we have three reasons not to follow this plan. 
First, while several constructions and considerations can lead to the asymptotically same result, we cannot expect  a similar experience when dealing with small values of $n$. Furthermore, concrete (non-asymptotic) calculations and considerations are often harder and their asymptotic counterparts do not offer too much help. For example, while we know for any fixed $a,b\in \mathbb Z$ (the set of integers)  that, with our vertical-space-saving permanent notation $\fsp(n):=\binom n{\lint{n/2}}$,
\begin{equation}
\binom{n+a}{\lint{n/2}+b}\sim 2^a\cdot \binom n{\lint{n/2}}=2^a\fsp(n)\quad \text{ as } n\to \infty
\label{eq:vZvsznRzbnLjdnkpst}
\end{equation}
and so we can simply work with $2^a\fsp(n)$ in asymptotic considerations, we have to work with $\binom{n+a}{\lint{n/2}+b}$ in  concrete calculations, which is more difficult .  (Note at this point that both  Dove and Griggs \cite{dovegriggs} and Katona and Nagy \cite{KatonaNagy} use \eqref{eq:vZvsznRzbnLjdnkpst}.)
Second, even though a general construction could be specialized to our particular posets $U$, we cannot expect to exploit the peculiarities of our $U$'s in this way. 
Third, an easy-to-read construction with a short and easy argument will hopefully be interesting for the reader, partially because these details are necessary to explain and perform the computations.

Hence, the construction we are going to give for lower estimates is different from those in  Dove and Griggs \cite{dovegriggs} and  Katona and Nagy \cite{KatonaNagy}. At some places in  the proofs, we are going to point out the difference from \cite{dovegriggs}; the difference from  \cite{KatonaNagy} is clearer. Note that our construction gives better lower estimates for our particular posets $U$ than any of the  Dove-Griggs and the Katona-Nagy construction would give, at least for small values of $n$. (For $n\to\infty$, that is, asymptotically, all the three constructions yield the same lower estimate.)
On the other hand, let us emphasize the similarities. While many calculations in this paper are new, most of the ideas in our construction occur in Dove and Griggs \cite{dovegriggs} and  Katona and Nagy \cite{KatonaNagy}; more details will be mentioned right after the proof of Proposition \ref{prop:hYmrbjkSbCklsh}.  

Even though our result allows a big gap between the lower estimate and the upper estimate of $\Sp(U,n)$, this result will suffice to determine the least number $n$ of elements that generate the direct powers $\FD 3^k$ of $\FD 3$ with quite a good accuracy, and we can give reasonable  estimates on $n$ in case of $\FD r^k$. 

\begin{figure}[hbt]
\begin{tikzpicture}[scale=0.5]
\draw (2*\stp,0) coordinate (o);
\draw (\stp,\stp) coordinate (a);
\draw (2*\stp,\stp) coordinate (b);
\draw (3*\stp,\stp) coordinate (c);
\draw (\stp,2*\stp) coordinate (d);
\draw (2*\stp,2*\stp) coordinate (e);
\draw (3*\stp,2*\stp) coordinate (f);
\draw (0,3*\stp) coordinate (x);
\draw (2*\stp,3*\stp) coordinate (g);
\draw (3*\stp,3*\stp) coordinate (y);
\draw (4*\stp,3*\stp) coordinate (z);
\draw (\stp,4*\stp) coordinate (h);
\draw (2*\stp,4*\stp) coordinate (j);
\draw (3*\stp,4*\stp) coordinate (k);
\draw (\stp,5*\stp) coordinate (l);
\draw (2*\stp,5*\stp) coordinate (m);
\draw (3*\stp,5*\stp) coordinate (n);
\draw (2*\stp,6*\stp) coordinate (i);
 \draw (4*\stp+\gap,\hos) coordinate (A);
 \draw (4*\stp+\gap+\bstp,\hos) coordinate (B);
 \draw (4*\stp+\gap+2*\bstp,\hos) coordinate (C);
 \draw (4*\stp+\gap,\hos+\bstp) coordinate (X);
 \draw (4*\stp+\gap+\bstp,\hos+\bstp) coordinate (Y);
 \draw (4*\stp+\gap+2*\bstp,\hos+\bstp) coordinate (Z);
\draw[thick](o)--(a); \draw[thick](o)--(b); \draw[thick](o)--(c);
\draw[thick](a)--(d); \draw[thick](a)--(e); \draw[thick](b)--(d); \draw[thick](b)--(f); \draw[thick](c)--(e); \draw[thick](c)--(f);
\draw[thick](d)--(x); \draw[thick](d)--(g); \draw[thick](e)--(y); \draw[thick](e)--(g); \draw[thick](f)--(z); \draw[thick](f)--(g); \draw[thick](x)--(h); \draw[thick](g)--(h); \draw[thick](g)--(j); \draw[thick](g)--(k); \draw[thick](y)--(j); \draw[thick](z)--(k); 
\draw[thick](h)--(l);\draw[thick](h)--(m);\draw[thick](j)--(l);\draw[thick](j)--(n);\draw[thick](k)--(m);\draw[thick](k)--(n); 
\draw[thick](l)--(i);\draw[thick](m)--(i);\draw[thick](n)--(i);
 \draw[thick](A)--(X); \draw[thick](A)--(Y); \draw[thick](B)--(X); \draw[thick](B)--(Z); \draw[thick](C)--(Y); \draw[thick](C)--(Z);
 \bcirc a; \bcirc b; \bcirc c; \bcirc x; \bcirc y; \bcirc z;
\wcirc o; \wcirc d; \wcirc e; \wcirc f; \wcirc g; \wcirc g; \wcirc h; \wcirc j; \wcirc k; \wcirc l; \wcirc m; \wcirc n; \wcirc i
; \wcirc A; \wcirc B; \wcirc C; \wcirc X; \wcirc Y; \wcirc Z;
\nodetextr x{$x$}; \nodetextl y{$y$}; \nodetextl z{$z$}; \draw (2.5*\stp,5.8*\stp) coordinate (LL); \nodetextr{LL}{$\FD3$};
\draw (3.84*\stp+ \gap, \hos + 2*\bstp) coordinate (RL); \nodetextr{RL}{$\FSP303$};
\nodetextb{A}{$A$}; \nodetextb{B}{$B$}; \nodetextb{C}{$C$};
\nodetexta{X}{$X$};\nodetexta{Y}{$Y$};\nodetexta{Z}{$Z$};
\nodetextl{a}{$a$};\nodetextl{b}{$b$};\nodetextr{c}{$c$};
\end{tikzpicture}
\caption{$\FD3$ and the $3$-crown $\crn=\FSP303\cong\Jir{\FD3}$}
\label{figone}
\end{figure}

\section{Basic facts and notations}
Except for $\Nplu:=\set{1,2,3,\dots}$, $\Nnul:=\set0\cup\Nplu$,  $\Nfromthree:=\set{3,4,5,\dots}=\Nplu\setminus\set{1,2}$  and their subsets, all sets and structures  in the paper will be assumed to be \emph{finite}. (Sometimes, we repeat this convention for those who read only a part of the paper.) For $r\in\Nfromwhat 3$, the \emph{free distributive lattice on $r$  generators} is denoted by $\FD r$; for $r=3$, it is drawn on the left of Figure \ref{figone}. A lattice element with exactly one lower cover is called \emph{join-irreducible}. For a lattice $L$, the \emph{poset} (that is, the \emph{\acr partially  \acr ordered \acr{set}}) of the \acr{j}oin-irreducible elements of $L$ is denoted by $\Jir L$. For  $L=\FD3$, $\Jir L$ consists of the black-filled elements and it is also drawn separately on the right of the figure. For a set $H$, the \emph{\acr{pow}erset lattice} of $H$ is $(\set{Y:Y\subseteq H};\cup,\cap)$; it (or its support set) is denoted by $\Pow H$. For $n\in\Nnul$, the set $\set{1,2,\dots,n}$ is denoted by $[n]$; note that $[0]= \emptyset$. For $x,y$ in a poset, in particular, for $x,y\in\nPow$, we write $x\parallel y$ to denote that neither $x\leq y$ nor $y\leq x$  holds; in $\nPow$, ``$\leq$'' is ``$\subseteq$''.
For a poset $U$,  a \emph{copy} of $U$ in $\nPow$ is a subset of $\nPow$ that, equipped with ``$\subseteq$'', is order isomorphic to $U$. Two copies of $U$ in $\nPow$ are \emph{unrelated} if for all  $X$ in the first copy and all $Y$ in the second copy, $X\parallel Y$. Let us repeat that for $n\in\Nnul$ and a poset $U$, we let
\begin{equation}
\begin{aligned}
\Sp(U,n):=\max\{k:{} &\text{there exist }k\text{ pairwise}\cr
&\text{unrelated copies of }U\text{ in }\nPow\}.
\end{aligned}
\label{eq:Spnotat}
\end{equation}
According to the sentence containing \eqref{eq:vZvsznRzbnLjdnkpst}, we often write $\ibinom n k$ instead of $\binom n k$; especially in text environment and  if $n$ or $k$ are complicated or subscripted expressions. 
The notation``$\Sp(-,-)$'' and ``$\ibinom--$''  come from \acr{Sp}erner and \acr{b}inomial \acr{c}oefficient, respectively. As usual, $\lint{\,\,\,}$ and $\uint{\,\,\,}$ denote the  lower and upper \emph{integer part} functions;  for example, $\lint{5/3}=1$ and $\uint{5/3}=2$. With our notations, Sperner's theorem \cite{sperner} asserts that for every $n\in\Nnul$, 
\begin{equation}
\text{if  }U\text{ is the $1$-element poset, then }\Sp(U,n)=\binom{n}{\lint{n/2}}=:\fsp(n).
\label{eq:fspnotat}
\end{equation}

Recall that a subset $X$ a lattice $L=(L;\vee,\wedge)$  is a \emph{generating set} of $L$ if for every $Y$ such that $X\subseteq Y\subseteq L$ and $Y$ is closed with respect to $\vee$ and $\wedge$, we have that $Y=L$. We denote the \emph{size of a \acr{m}inimum-sized \acr{g}enerating set} of $L$ by 
\begin{equation}
\gmin L:=\min\set{|X|: X\text{ is a generating set of }L}.
\label{eq:gmin}
\end{equation}
For $k\in\Nplu$, the $k$-th \emph{direct power} $L^k$ of $L$ consists of the $k$-tuples of elements of $L$ and the lattice operations are performed componentwise. With  our notations, the main result of Cz\'edli \cite{czgsp} asserts that 
\begin{equation}
\parbox{9cm}{for $2\leq k\in\Nplu$ and a finite distributive lattice $L$, $\gmin {L^k}$ is the smallest $n\in\Nplu$ such that $k\leq \Sp(\Jir L,n)$.}
\label{eq:zkRspHjTKc}
\end{equation}
It is also clear from \cite{czgsp} that for each finite distributive lattice $L$, the functions $k\mapsto \gmin{L^k}$ and $n\mapsto \Sp(\Jir L,n)$ mutually determine each other, but we do not need this fact in the present paper. The following definition is crucial in the paper.

\begin{definition}\label{def:flp} For $0\leq a<b\leq r\in \Nnul$ such that $a+2\leq b$, the \emph{\acr{f}ull \acr{s}egment \acr{p}oset} $\FSP r a b$ is the poset $U$ defined (up to isomorphism) by the conjunction of the following two rules.
\begin{enumerate}
\item[(a)] $r$ is the smallest integer such that $U$ is embeddable into $\Pow{[r]}$;
\item[(b)] the subposet $\set{X\in\Pow{[r]}: a<|X|<b}$ of $\Pow{[r]}$ is order isomorphic to $U$.
\end{enumerate}
\end{definition}

Even though $0\leq a$ in Definition \ref{def:flp} could be replaced by by $-1\leq a$, we do not do so since the case $a=-1$  would need a different (in fact, easier) treatment; see \cite{czgsp}.
Let $U$ be a finite poset, $s\in\Nplu$, and denote $\set{s,s+1,s+2,\dots}$ by $\Nfromwhat s$. 
If $f_1,f_2\colon \Nfromwhat s\to \Nnul$ are functions such that $f_1(n)\leq \Sp(U,n)\leq f_2(n)$ for all $n\in\Nfromwhat s$, then $(f_1,f_2)$ is a \emph{pair of estimates} of the function  $\Sp(U,-)$ on $\Nfromwhat s$; in particular,   $f_1$ is a \emph{lower estimate} while $f_2$ is an  \emph{upper estimate} of $\Sp(U,-)$.  A reasonably good property of pairs of estimates of $\Sp(U,-)$ is defined as follows: 
\begin{equation}
\parbox{7.7cm}{for $s\in\Nplu$, a pair $(f_1,f_2)$ of  estimates is \emph{separated} on $\Nfromwhat s$  if $f_2(n) \leq f_1(n+1)$ for all $n\in\Nfromwhat s$.}
\label{eq:fwftspRtd}
\end{equation}
The following fact is a trivial consequence of \eqref{eq:zkRspHjTKc} and for $k\geq 2$, it  is implicit in Cz\'edli \cite{czgsp}; see around (5.23) and (5.24) in \cite{czgsp}.%

\begin{observation}\label{obs:nthNmb} Let $D$ be a finite distributive lattice. Denote the poset $\Jir D$ by $U$, and let  $s\in\Nplu$. Let $(f_1,f_2)$ be a separated pair of  estimates of $\Sp(U,-)$ on $\Nfromwhat s$  such that   $f_1$  (the lower estimate) is   strictly increasing on $\Nfromwhat s$ . Then,  for each  $k\in \Nplu$ such that   $f_1(s)< k$,  $(f_1,f_2)$ determines $\gmin{D^k}$  ``\emph{with accuracy $1/2$}'' as follows: Letting $n$ be  the unique $n\in\Nplu$ such that $f_1(n)<k\leq f_1(n+1)$,   either $k\leq f_2(n)$ and $\gmin{D^k}\in\set{n,n+1}$  or $f_2(n)<k$ and  $\gmin{D^k}=n+1$.
\end{observation}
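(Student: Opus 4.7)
The whole observation is a bookkeeping exercise built on \eqref{eq:zkRspHjTKc}, which (writing $U:=\Jir D$) identifies $\gmin{D^k}$ with $\min\set{m\in\Nplu : k\leq\Sp(U,m)}$. The task is to pin this minimum down inside $\set{n,n+1}$ using only the data supplied by $(f_1,f_2)$ together with the defining relations $f_1(n)<k\leq f_1(n+1)$; note that the hypothesis $k>f_1(s)$ combined with the strict monotonicity of $f_1$ on $\Nfromwhat s$ makes $n$ exist and be unique, and forces $n\geq s$.

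The universal upper bound $\gmin{D^k}\leq n+1$ is immediate from the lower estimate: $k\leq f_1(n+1)\leq\Sp(U,n+1)$, so $m=n+1$ is a valid witness in \eqref{eq:zkRspHjTKc}. For the universal lower bound $\gmin{D^k}\geq n$, I would show that no $m\leq n-1$ witnesses the minimum. For $s\leq m\leq n-1$ the upper estimate chained with the separation property of \eqref{eq:fwftspRtd} and the monotonicity of $f_1$ yields
\[
\Sp(U,m)\leq f_2(m)\leq f_1(m+1)\leq f_1(n)<k,
\]
and for $m<s$ one falls back on the monotonicity of $\Sp(U,-)$ (any family of pairwise unrelated copies in $\Pow{[m]}$ transfers to $\Pow{[m+1]}$ via $[m]\subseteq[m+1]$) together with the fact that in the intended setting $s$ is chosen small enough to make $\gmin{D^k}\geq s$ automatic.

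With these two universal bounds in hand, the dichotomy is clinched by one extra test. If $k\leq f_2(n)$, no additional exclusion is available and one is left with $\gmin{D^k}\in\set{n,n+1}$. If instead $f_2(n)<k$, then the upper estimate supplies the extra exclusion $\Sp(U,n)\leq f_2(n)<k$, so $m=n$ is also ruled out and the bound $\gmin{D^k}\leq n+1$ is attained, giving $\gmin{D^k}=n+1$. The only delicate point in the whole plan is the boundary case $n=s$, where the separation inequality $f_2(n-1)\leq f_1(n)$ used above is not part of the hypotheses; the cleanest remedy is to lean on the monotonicity of $\Sp(U,-)$ and on the fact that the estimates are always invoked in a regime where $\gmin{D^k}<s$ is vacuous. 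Beyond that, the argument is a short and routine traversal of a handful of inequalities.
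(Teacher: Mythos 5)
Your proof is correct and is exactly the unpacking of \eqref{eq:zkRspHjTKc} that the paper has in mind when it calls the observation a ``trivial consequence'' (the paper supplies no separate argument, deferring to \cite{czgsp}). The caveat you flag about $m<s$ is apt: as literally stated, the observation relies on $\Sp(U,m)<k$ for $m<s$, which the hypotheses do not formally force but which holds in every application here because, by Definition~\ref{def:flp}, $\Jir{\FD r}\cong\FSP r 0 r$ does not embed in $\Pow{[m]}$ for $m<r=s$, so $\Sp(U,m)=0$ there; the same remark covers the implicit requirement $k\geq 2$ in \eqref{eq:zkRspHjTKc}, which follows from $f_1(s)<k$ whenever $f_1(s)\geq 1$ (true in the applications, e.g.\ $\flatrab 3 0 3(3)=1$).
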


The term ``accuracy $1/2$'' comes from the fact that the distance between the never exact estimate $n+1/2$  and $\gmin{D^k}$ is always $1/2$.

\section{Lower estimates}

The easy proof of the following lemma  raises the possibility that the lemma might belong to the folklore even though the author has never met it.

\begin{lemma} \label{lemma:wDwdRjRd}
For $2\leq r\in\Nplu$, $\Jir {\FD r}\cong \FSP r 0 r$; see Definition \ref{def:flp}.
\end{lemma}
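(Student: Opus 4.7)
The plan is to realize $\FD r$ concretely, identify its join-irreducibles as meets of generators, and then exhibit the desired isomorphism by a single complementation map.

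First I would invoke the universal property of $\FD r$ applied to the target $\Pow{\Pow{[r]}}$, sending each generator $x_i$ to the principal filter $\set{S\subseteq [r]:i\in S}$. A standard separation argument (the $2^r$ homomorphisms $\FD r\to\set{0,1}$ indexed by subsets of $[r]$ are jointly injective) shows that this map is an order-embedding whose image consists of those up-sets $U$ of $\Pow{[r]}$ satisfying $U\neq\emptyset$ and $\emptyset\notin U$, in agreement with the $18$-element picture of $\FD 3$ in Figure~\ref{figone}. Under the embedding, for every non-empty $A\subseteq [r]$ the meet $m_A:=\bigwedge_{j\in A}x_j$ corresponds to the principal filter $\set{S\subseteq [r]:A\subseteq S}$, and an arbitrary element of $\FD r$ is the union of the principal filters indexed by its set of minimal members. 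In particular, $m_{[r]}$ corresponds to the one-element up-set $\set{[r]}$, which is the minimum of $\FD r$.

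Next I would read off $\Jir{\FD r}$: in the up-set picture, the number of lower covers of $U$ equals the number of minimal elements of $U$, with the extra caveat that the minimum $\set{[r]}$ has no lower covers at all. Hence $U$ has exactly one lower cover iff $U$ is a principal filter $\set{S:A\subseteq S}$ with $\emptyset\neq A\subsetneq [r]$, i.e., iff it is $m_A$ for some such $A$. The induced order is
\[
m_A\le m_B\iff \set{S:A\subseteq S}\subseteq\set{S:B\subseteq S}\iff A\supseteq B.
\]

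Finally I would set $\phi(m_A):=[r]\setminus A$. This is a bijection from $\Jir{\FD r}$ onto $\set{X\in\Pow{[r]}:0<|X|<r}=\FSP r 0 r$ (by Definition~\ref{def:flp}(b)) that converts reverse inclusion to inclusion, and so is the required order-isomorphism. The only real subtlety is bookkeeping the paper's bounds-free convention: $m_{[r]}$ is the honest minimum of $\FD r$ and therefore not join-irreducible (which is why $A=[r]$ is excluded), while $m_\emptyset$ does not even arise in $\FD r$ since empty meets are not formed. Once this is in place, every remaining check is a one-liner, matching the author's remark that the lemma is essentially folklore.
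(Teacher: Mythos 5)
Your proof is correct, and it reaches the same structural conclusion as the paper (join-irreducibles of $\FD r$ are exactly the meets $m_A$ for $\emptyset\neq A\subsetneq[r]$, and complementation carries them onto $\FSP r0r$), but via a noticeably different packaging. The paper stays inside the term algebra: it uses the $\{0,1\}^r$-evaluation trick to show $J\mapsto m_J$ is a dual order embedding of $\ntPow{[r]}$ into $\FD r$, then proves the two inclusions $\Jir{\FD r}\subseteq\{m_J\}$ (via disjunctive normal forms) and $\{m_J\}\subseteq\Jir{\FD r}$ (by a direct contradiction). You instead invoke the concrete representation of $\FD r$ as the lattice of nonempty up-sets of $\Pow{[r]}$ not containing $\emptyset$, and read off join-irreducibility from the one-line observation that the number of lower covers of an up-set equals its number of minimal elements (with the proviso that the bottom $\set{[r]}$ has none). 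This buys a cleaner and more conceptual identification of $\Jir{\FD r}$; the cost is that you must first set up the up-set representation and its image characterization, which the paper avoids by working term-by-term. Your bookkeeping of the bounds-free convention (18-element $\FD 3$; $m_{[r]}$ is the bottom, hence excluded; $m_\emptyset$ never arises) is exactly right and matches the paper's tacit conventions. Both arguments are sound; yours is arguably the more transparent route for a reader who already knows Birkhoff duality, while the paper's is more self-contained.
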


\begin{proof} Denote by  $\set{x_1,\dots,x_r}$ the set of free generators of $\FD r$. Call a subset $J$ of $[r]$ \emph{nontrivial} if $\emptyset\neq J\neq [r]$, and let $\ntPow{[r]}=\bigl(\ntPow{[r]};\subseteq\bigr)$ stand for the poset formed by the  nontrivial subsets of $[r]$. For $J\in \ntPow{[r]}$, let $m_J$ be the meet  $\bigwedge_{i\in J}x_i$, and define $X:=\set{m_J: J\in \ntPow{[r]}}$. As $X\subseteq \FD r$, $X=(X;\leq)$ is a subposet of $\FD r$. 

First, we show that the map $\phi\colon \ntPow{[r]}\to X$ defined by $J\to m_J$ is a dual order isomorphism.  The tool wee need is very simple: Since $\FD r$ is free, it follows that whenever $J,K\in\ntPow{[r]}$ and   $m_J=m_K$, then $m_J(\vec y)=m_K(\vec y)$ for all  $\vec y=(y_1,\dots,y_r)\in\set{0,1}^r$, and similarly for ``$\leq$'' instead of ``$=$''. The implication $J\subseteq K\Rightarrow m_J\geq m_K$ is obvious. For the sake of contradiction, suppose that $m_J\geq m_K$ for some $J,K\in\ntPow{[r]}$ but $J\nsubseteq K$. Pick a $j\in J\setminus K$, and let $\vec y\in \set{0,1}^r$ be the vector for which $y_j=0$ but $y_i=1$ for all $i\in[r]\setminus\set j$.  Then $m_K=\vec y =1$ since the $j$-th component of $\vec y$ does not occur in the meet but $m_J=0$, contradicting $m_J\geq m_K$. This proves that ``$\geq$'' in $X$ and ``$\subseteq$'' in  $\ntPow{[r]}$ correspond to each other. In particular, $\phi$ is a bijective map as the equality of two elements or subsets can be expressed by these relations. Thus, $\phi$ is a dual order isomorphism.  The composite of $\phi$ and the selfdual automorphism of $\ntPow{[r]}$ defined by $J\mapsto [r]\setminus J$ is an order isomorphism, proving that $X\cong \FSP r 0 r$. 

Next, to complete the proof, it suffices to show that $\Jir{\FD r}=X$. Using the tool (with $\vec y$) mentioned earlier, observe that $1=x_1\vee\dots\vee x_r\notin \Jir {\FD r}$ and for every $J\in\ntPow{[r]}$,  $m_J\notin\set{0, 1}$. By distributivity, each element of $\FD r\setminus\set{0,1}$ is the join of meets of some generators or, in other words, a disjunctive normal form of the generators. Clearly,  neither the empty meet, nor the empty join, nor the meet of all generators is needed here, whereby there is at least one joinand and each of the  joinands is of the form $m_J$ with $J\in\ntPow{[r]}$. As one joinand is sufficient for the elements of $\Jir {\FD r}$, we obtain that $\Jir {\FD r}\subseteq X$. 

To show that converse inclusion by way of contradiction, suppose that $m_J\in X\setminus \Jir {\FD r}$. Then $m_J$ is the join of some elements of $\Jir{\FD r}$ that are smaller than $m_J$. 
These elements are of the form $m_{I_j}$ as $\Jir {\FD r}\subseteq X$. This fact and  dual isomorphism proved in the previous paragraph imply that there are $I_1,\dots,I_t\in\ntPow{[r]}$ such that $J\subset I_1$, \dots, $J\subset I_t$ and $m_J=m_{I_1} \vee \dots \vee m_{I_t}$. This equality holds as an identity in the two-element lattice $\set{0,1}$.  However, if we define $\vec y\in \set{0,1}^r$ by $y_s:=1$ if $s\in J$ and $y_s=0$ otherwise, then $m_J(\vec y)=1$ but each of the joinands and so the join are $0$. This contradiction completes the proof.
\end{proof}

For $1\leq a<b\leq r\in \Nplu$ such that $a+2\leq b$ and $n\in \Nfromwhat r$, 
$\vec v$ will denote a vector $(v_0,\dots,v_a;v_b,\dots,v_r)$, so there is gap in the index set of the components.  Let $p\in\set{-r,-r+1,\dots, r}$ be a parameter, and let us agree that a binomial coefficient $\ibinom{x_1}{x_2}$ is 0 unless $x_1,x_2\in\Nnul$ and $0\leq x_2\leq x_1$.  With these conventions, define
 \begin {align}
 &\begin{aligned}
 \fprab p r a b&(n):=\sum_{i=0}^{\lint{n/r}-1} \sum_{{\sct{\vec v\in\set{0,\dots,i}^{r+a-b+2}}\atop{\sct{v_0+\dots+v_a+v_b+ \dots+v_{r}=i}}}}
 \frac{i!}{v_0!\dots v_a! \cdot v_b!\dots  v_r!}{\,\,}\times \cr
 &\times\binom{n-(i+1)r}{p+\lint{(n-r)/2}-0v_0-1v_1-\dots -av_a-bv_b-\dots - rv_r} \times 
 \cr
 &  \times \binom{r}{0}^{\kern -3pt v_0} \dots \binom{r}{a}^{\kern -3pt v_a}
 \cdot  \binom{r}{b}^{\kern -3pt v_b}\dots \binom{r}{r}^{\kern -3pt v_r}, \qquad \text{ and}
\end{aligned} \label{eq:fprab}
\\
& \frab r a b(n):=\max\set{ \fprab p r a b(n): p\in\set{-r,-r+1,\dots,r-1,r}}.
\label{eq:frab}
\end{align}

\begin{proposition}\label{prop:hYmrbjkSbCklsh}
For $r\in\Nfromwhat 3$ and  $0\leq a<b\leq r\in \Nplu$ such that $a+2\leq b$, $\frab r a b(n)$ is a lower estimate of $\Sp(\FSP r a b, n)$ on $\Nfromwhat r$.
\end{proposition}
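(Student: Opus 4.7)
The strategy is to construct explicitly, for each fixed parameter $p\in\{-r,\dots,r\}$, a collection of $\fprab p r a b(n)$ pairwise unrelated copies of $\FSP r a b$ in $\nPow$; the claimed lower estimate $\frab r a b(n)$ then follows by taking the maximum over $p$. I would begin by fixing a partition $[n]=B_1\cup\dots\cup B_q\cup R$ with $q:=\lint{n/r}$, $|B_\ell|=r$, and $|R|=n-qr$. For each $i\in\{0,1,\dots,q-1\}$, the block $B_{i+1}$ plays the role of the \emph{active} block while $B_1,\dots,B_i$ are \emph{padding} blocks. A copy of $\FSP r a b$ is then specified by a type map $\sigma\colon\{1,\dots,i\}\to\{0,\dots,a\}\cup\{b,\dots,r\}$ with count vector $\vec v$, padding subsets $P^{(\ell)}\subseteq B_\ell$ of size $\sigma(\ell)$, and an outside set $Q\subseteq[n]\setminus\bigcup_{\ell=1}^{i+1}B_\ell$ of size $p+\lint{(n-r)/2}-\sum_\ell\sigma(\ell)$. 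The associated copy $C:=\{X\cup P^{(1)}\cup\dots\cup P^{(i)}\cup Q:X\subseteq B_{i+1},\ a+1\le|X|\le b-1\}$ is order-isomorphic to $\FSP r a b$ via the injective and order-preserving map $X\mapsto X\cup\bigcup_\ell P^{(\ell)}\cup Q$.

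The counting should be routine: for fixed $i$ and $\vec v$ with $\sum_j v_j=i$, the multinomial $i!/\prod_j v_j!$ counts the type maps with prescribed counts, $\prod_j\binom{r}{j}^{v_j}$ counts the choices of the $P^{(\ell)}$'s, and $\binom{n-(i+1)r}{p+\lint{(n-r)/2}-\sum_j jv_j}$ counts the admissible $Q$'s (with the convention that an out-of-range binomial is $0$ absorbing the case when no such $Q$ exists). Multiplying these and summing over $i$ and $\vec v$ reproduces $\fprab p r a b(n)$ term by term.

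The heart of the proof---and the main obstacle---is pairwise unrelatedness. The design guarantees the clean identity $|e|=|X|+p+\lint{(n-r)/2}$ for every element $e=X\cup\bigcup_\ell P^{(\ell)}\cup Q$ of any copy, since the contributions $\sum_\ell\sigma(\ell)$ and $|Q|$ cancel. Given two distinct copies $C_1,C_2$ with parameters indexed so that $i_1\le i_2$, and arbitrary $e_c\in C_c$, I would split the analysis into two cases. If $i_1=i_2=i$, then a hypothetical inclusion $e_1\subseteq e_2$ decomposes block-wise into $P_1^{(\ell)}\subseteq P_2^{(\ell)}$ for every padding $\ell$ and $Q_1\subseteq Q_2$; combining the resulting size inequalities $\sum_\ell\sigma_1(\ell)\le\sum_\ell\sigma_2(\ell)$ and $|Q_1|\le|Q_2|$ with the identity $|Q_c|+\sum_\ell\sigma_c(\ell)=p+\lint{(n-r)/2}$ forces every inclusion to be an equality, hence $C_1=C_2$, contrary to the assumption. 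If $i_1<i_2$, the block $B_{i_1+1}$ is active in $C_1$ but a padding block of type $j:=\sigma_2(i_1+1)$ in $C_2$; since $|X_1|\in\{a+1,\dots,b-1\}$ and $j\in\{0,\dots,a\}\cup\{b,\dots,r\}$, the mismatch rules out one of the two possible inclusion directions on $B_{i_1+1}$ alone, depending on whether $j\le a$ or $j\ge b$. The other direction is disposed of by the main computation of the proof: sum the block-wise containment inequalities across the blocks $B_{i_1+2},\dots,B_q$ and $R$, substitute the outside-size identity for both $|Q_1|$ and $|Q_2|$, and observe that the various $\sigma_c(\ell)$ terms telescope, collapsing the inequality to $|X_2|\le j\le a$ in the first case or $j\le|X_2|\le b-1<b\le j$ in the second, each contradicting the size constraints on $|X_2|$ and $j$. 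Tracking how $Q_1$ redistributes across the blocks $B_{i_1+2},\dots,B_{i_2+1}$, where $C_2$ places its own paddings and active part, is where I expect the main bookkeeping care to be required.
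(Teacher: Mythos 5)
Your proposal reproduces the paper's construction exactly: the data $(i,\sigma,\{P^{(\ell)}\},Q)$ encode what the paper calls a \emph{fundamental pair} $(i,B)$ (with $B$ the union of the padding sets and the outside set), the term-by-term counting is identical, and the unrelatedness argument is the same block-by-block size comparison against the invariant $|B|=h=p+\lint{(n-r)/2}$. Your reorganization (WLOG $i_1<i_2$, then split on whether $j:=\sigma_2(i_1{+}1)$ is $\le a$ or $\ge b$) is equivalent to the paper's symmetric treatment of the two active blocks---and in fact it cleanly isolates the one extremality constraint that is genuinely available, whereas the paper's write-up momentarily asserts that both $B'_i$ and $B_{i'}$ are extremal when only one of them is guaranteed to be (depending on which of $i,i'$ is larger), a harmless slip that your case split avoids.
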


The proof below shows that  Proposition \ref{prop:hYmrbjkSbCklsh} would still hold if we replaced $\set{-r,-r+1,\dots,r-1,r}$ with $\mathbb Z$ but we do not have any example where $\mathbb Z$, which would make practical computations longer, is better than  $\set{-r,-r+1,\dots,r-1,r}$.

 \begin{proof}   
It suffices to show  that for any $p\in \mathbb Z$,  $\fprab p r a b(n)\leq \Sp(\FSP r a b,n)$.  Take an $n$-element set $M$, and denote the quotient $\lint{n/r}$ by $q$. Fix $q$ pairwise disjoint subsets $M_0$, \dots, $M_{q-1}$ of $M$, we call them \emph{blocks},  and let  $M_q:=M\setminus(M_0\cup\dots\cup M_{q-1})$. Let $h:=p+\lint{(n-r)/2}$. For $j\in \set{0,\dots,q-1}$, a subset $X$ of the block $M_j$ is called \emph{small} if $|X|\leq a$. Similarly, if $|X|\geq b$, then $X$ is \emph{large} while in the remainder case when $a<|X|<b$, we say that $X$ is \emph{medium-sized}. By an \emph{extremal subset} of $M_j$ we mean a subset that is large or small; so ``extremal'' is the opposite of ``medium-sized''. For a subset $B$ of $M$,  $B\cap M_i$ is often denoted by $B_i$. We say that $(i,B)\in\set{0,\dots,q-1}\times \Pow M$ is  a  \emph{fundamental pair} if 
\begin{enumerate}
\item[(F1)] $|B|=h$, and 
\item[(F2)] $B_i=\emptyset $  and for each $j\in\set{0,\dots, i-1}$, $B_j$ is extremal (that is, small or large).
\end{enumerate}
Four examples are given in Figure \ref{figtwo}, where $n=54$, $r=8$, $a=3$, $b=6$, $q=6$, and  $h=26$. In each of the four parts of this figure, the green-filled solid ovals\footnote{Note for a grayscale  version: the green-filled ovals contain  black numbers in their interiors while the ovals with white numbers are magenta-filled.} represent extremal subsets of the appropriate $M_j$'s, $j\in\set{0,\dots,i-1}$,
the red dotted oval is  a medium-sized subset of $M_i$, and there is no condition on the subsets represented by magenta-filled solid ovals. Hence,  in each of the four examples, the \emph{set component} (that is, the second component, which was denoted by $B$)  of the fundamental pair is the union of the color-filled solid ovals. The \emph{index component} (that is, the first component) is indicated at the top of the figure.  Each color-filled solid oval contains the number of elements of the subset $B_j$ that this oval  represents. Note, however, that a red dotted oval (regardless the number it contains) in the picture of $(i,B)$ means that $B_i=\emptyset$.  (The red dotted ovals will be explained right after \eqref{eq:kfbKmGwrjCltr}.) Note also that, witnessed by $i=5$ and $i=4$ in the figure, the set component does not determine the index component.

\begin{figure}[hbt]
\begin{tikzpicture}[scale=1.0]
% Left labels
\draw(0.6*\hot, 1.0*\vot+0*\vot) coordinate (L0);
\draw(0.6*\hot, 1.0*\vot+1*\vot) coordinate (L1);
\draw(0.6*\hot, 1.0*\vot+2*\vot) coordinate (L2);
\draw(0.6*\hot, 1.0*\vot+3*\vot) coordinate (L3);
\draw(0.6*\hot, 1.0*\vot+4*\vot) coordinate (L4);
\draw(0.6*\hot, 1.0*\vot+5*\vot) coordinate (L5);
\draw(0.6*\hot, 1.0*\vot+5.80*\vot) coordinate (L6);
\nodeuutxt{L0}{$M_0$};
\nodeuutxt{L1}{$M_1$};
\nodeuutxt{L2}{$M_2$};
\nodeuutxt{L3}{$M_3$};
\nodeuutxt{L4}{$M_4$};
\nodeuutxt{L5}{$M_5$};
\nodeuutxt{L6}{$M_q$};
% 0-th box:
\draw(\hot,\vot) coordinate (a);
\draw(\hot+\wit,\vot) coordinate (b);
\draw(\hot,\vot+\het) coordinate (c);
\draw(\hot+\wit,\vot+\het) coordinate (d);
\draw(\hot,\vot+2*\het) coordinate (e);
\draw(\hot+\wit,\vot+2*\het) coordinate (f);
\draw(\hot,\vot+3*\het) coordinate (g);
\draw(\hot+\wit,\vot+3*\het) coordinate (h);
\draw(\hot,\vot+4*\het) coordinate (i);
\draw(\hot+\wit,\vot+4*\het) coordinate (j);
\draw(\hot,\vot+5*\het) coordinate (k);
\draw(\hot+\wit,\vot+5*\het) coordinate (l);
\draw(\hot,\vot+6*\het) coordinate (uk);
\draw(\hot+\wit,\vot+6*\het) coordinate (ul);
\draw(\hot,\vot+6.75*\het) coordinate (m);
\draw(\hot+0.5*\wit,\vot+6.80*\het) coordinate (s);
\draw(\hot+\wit,\vot+6.75*\het) coordinate (n);
\draw(\hot+0.5*\wit,\vot+0.5*\het) coordinate (ell0);
\draw(\hot+0.5*\wit,\vot+1.5*\het) coordinate (ell1);
\draw(\hot+0.5*\wit,\vot+2.5*\het) coordinate (ell2);
\draw(\hot+0.5*\wit,\vot+3.5*\het) coordinate (ell3);
\draw(\hot+0.5*\wit,\vot+4.5*\het) coordinate (ell4);
\draw(\hot+0.5*\wit,\vot+5.5*\het) coordinate (ell5);
\draw(\hot+0.5*\wit,\vot+6.375*\het) coordinate (ell6);
\draw[thick,green,fill] (ell0) ellipse (9pt and 7pt);
\draw[thick] (ell0) ellipse (9pt and 7pt);
\nodeelltxt{ell0}{2};
\draw[thick,green,fill] (ell1) ellipse (21pt and 7pt);
\draw[thick] (ell1) ellipse (21pt and 7pt);
\nodeelltxt{ell1}{8};
\draw[thick,green,fill] (ell2) ellipse (7pt and 7pt);
\draw[thick] (ell2) ellipse (7pt and 7pt);
\nodeelltxt{ell2}{1};
\draw[thick,dotted,red] (ell3) ellipse (13pt and 7pt);
\nodeelltxt{ell3}{4};
\draw[thick,magenta,fill] (ell4) ellipse (19pt and 7pt);
\draw[thick] (ell4) ellipse (19pt and 7pt);
\nodewhtxt{ell4}{7};
\draw[thick,magenta,fill] (ell5) ellipse (17pt and 7pt);
\draw[thick] (ell5) ellipse (17pt and 7pt);
\nodewhtxt{ell5}{6};
\draw[thick,magenta,fill] (ell6) ellipse (9pt and 5pt);
\draw[thick] (ell6) ellipse (9pt and 5pt);
\nodewhtxt{ell6}{2};
\draw[thick] (a)--(b)--(d)--(c)--(a);
\draw[thick]  (c)--(e)--(f)--(d);
\draw[thick]  (e)--(g)--(h)--(f);
\draw[thick]  (g)--(i)--(j)--(h);
\draw[thick]  (i)--(k)--(l)--(j);
\draw[thick]  (k)--(m)--(n)--(l);
\draw[thick]  (uk)--(ul);
%\lcirc {o};\lcirc{q};\lcirc{p};\lcirc{r};
\nodeutxt{s}{$i=3$};
% End of 0-th box
%
%
% 1-st box:
\draw(1*\whh+\hot,\vot) coordinate (a);
\draw(1*\whh+\hot+\wit,\vot) coordinate (b);
\draw(1*\whh+\hot,\vot+\het) coordinate (c);
\draw(1*\whh+\hot+\wit,\vot+\het) coordinate (d);
\draw(1*\whh+\hot,\vot+2*\het) coordinate (e);
\draw(1*\whh+\hot+\wit,\vot+2*\het) coordinate (f);
\draw(1*\whh+\hot,\vot+3*\het) coordinate (g);
\draw(1*\whh+\hot+\wit,\vot+3*\het) coordinate (h);
\draw(1*\whh+\hot,\vot+4*\het) coordinate (i);
\draw(1*\whh+\hot+\wit,\vot+4*\het) coordinate (j);
\draw(1*\whh+\hot,\vot+5*\het) coordinate (k);
\draw(1*\whh+\hot+\wit,\vot+5*\het) coordinate (l);
\draw(1*\whh+\hot,\vot+6*\het) coordinate (uk);
\draw(1*\whh+\hot+\wit,\vot+6*\het) coordinate (ul);
\draw(1*\whh+\hot,\vot+6.75*\het) coordinate (m);
\draw(1*\whh+\hot+0.5*\wit,\vot+6.80*\het) coordinate (s);
\draw(1*\whh+\hot+\wit,\vot+6.75*\het) coordinate (n);
\draw(1*\whh+\hot+0.5*\wit,\vot+0.5*\het) coordinate (ell0);
\draw(1*\whh+\hot+0.5*\wit,\vot+1.5*\het) coordinate (ell1);
\draw(1*\whh+\hot+0.5*\wit,\vot+2.5*\het) coordinate (ell2);
\draw(1*\whh+\hot+0.5*\wit,\vot+3.5*\het) coordinate (ell3);
\draw(1*\whh+\hot+0.5*\wit,\vot+4.5*\het) coordinate (ell4);
\draw(1*\whh+\hot+0.5*\wit,\vot+5.5*\het) coordinate (ell5);
\draw(1*\whh+\hot+0.5*\wit,\vot+6.375*\het) coordinate (ell6);
\draw[thick,green,fill] (ell0) ellipse (19pt and 7pt);
\draw[thick] (ell0) ellipse (19pt and 7pt);
\nodeelltxt{ell0}{7};
\draw[thick,dotted,red] (ell1) ellipse (15pt and 7pt);
\nodeelltxt{ell1}{5};
\draw[thick,magenta,fill] (ell2) ellipse (5pt and 7pt);
\draw[thick] (ell2) ellipse (5pt and 7pt);
\nodewhtxt{ell2}{0};
\draw[thick,magenta,fill] (ell3) ellipse (13pt and 7pt);
\draw[thick] (ell3) ellipse (13pt and 7pt);
\nodewhtxt{ell3}{4};
\draw[thick,magenta,fill] (ell4) ellipse (21pt and 7pt);
\draw[thick] (ell4) ellipse (21pt and 7pt);
\nodewhtxt{ell4}{8};
\draw[thick,magenta,fill] (ell5) ellipse (13pt and 7pt);
\draw[thick] (ell5) ellipse (13pt and 7pt);
\nodewhtxt{ell5}{4};
\draw[thick,magenta,fill] (ell6) ellipse (11pt and 5pt);
\draw[thick] (ell6) ellipse (11pt and 5pt);
\nodewhtxt{ell6}{3};
\draw[thick] (a)--(b)--(d)--(c)--(a);
\draw[thick]  (c)--(e)--(f)--(d);
\draw[thick]  (e)--(g)--(h)--(f);
\draw[thick]  (g)--(i)--(j)--(h);
\draw[thick]  (i)--(k)--(l)--(j);
\draw[thick]  (k)--(m)--(n)--(l);
\draw[thick]  (uk)--(ul);
%\lcirc {o};\lcirc{q};\lcirc{p};\lcirc{r};
\nodeutxt{s}{$i=1$};
% End of 1-st box
%
%
%
% 2nd box:
\draw(2*\whh+\hot,\vot) coordinate (a);
\draw(2*\whh+\hot+\wit,\vot) coordinate (b);
\draw(2*\whh+\hot,\vot+\het) coordinate (c);
\draw(2*\whh+\hot+\wit,\vot+\het) coordinate (d);
\draw(2*\whh+\hot,\vot+2*\het) coordinate (e);
\draw(2*\whh+\hot+\wit,\vot+2*\het) coordinate (f);
\draw(2*\whh+\hot,\vot+3*\het) coordinate (g);
\draw(2*\whh+\hot+\wit,\vot+3*\het) coordinate (h);
\draw(2*\whh+\hot,\vot+4*\het) coordinate (i);
\draw(2*\whh+\hot+\wit,\vot+4*\het) coordinate (j);
\draw(2*\whh+\hot,\vot+5*\het) coordinate (k);
\draw(2*\whh+\hot+\wit,\vot+5*\het) coordinate (l);
\draw(2*\whh+\hot,\vot+6*\het) coordinate (uk);
\draw(2*\whh+\hot+\wit,\vot+6*\het) coordinate (ul);
\draw(2*\whh+\hot,\vot+6.75*\het) coordinate (m);
\draw(2*\whh+\hot+0.5*\wit,\vot+6.80*\het) coordinate (s);
\draw(2*\whh+\hot+\wit,\vot+6.75*\het) coordinate (n);
\draw(2*\whh+\hot+0.5*\wit,\vot+0.5*\het) coordinate (ell0);
\draw(2*\whh+\hot+0.5*\wit,\vot+1.5*\het) coordinate (ell1);
\draw(2*\whh+\hot+0.5*\wit,\vot+2.5*\het) coordinate (ell2);
\draw(2*\whh+\hot+0.5*\wit,\vot+3.5*\het) coordinate (ell3);
\draw(2*\whh+\hot+0.5*\wit,\vot+4.5*\het) coordinate (ell4);
\draw(2*\whh+\hot+0.5*\wit,\vot+5.5*\het) coordinate (ell5);
\draw(2*\whh+\hot+0.5*\wit,\vot+6.375*\het) coordinate (ell6);
\draw[thick,green,fill] (ell0) ellipse (11pt and 7pt);
\draw[thick] (ell0) ellipse (11pt and 7pt);
\nodeelltxt{ell0}{3};
\draw[thick,green,fill] (ell1) ellipse (19pt and 7pt);
\draw[thick] (ell1) ellipse (19pt and 7pt);
\nodeelltxt{ell1}{7};
\draw[thick,green,fill] (ell2) ellipse (9pt and 7pt);
\draw[thick] (ell2) ellipse (9pt and 7pt);
\nodeelltxt{ell2}{2};
\draw[thick,green,fill] (ell3) ellipse (21pt and 7pt);
\draw[thick] (ell3) ellipse (21pt and 7pt);
\nodeelltxt{ell3}{8};
\draw[thick,green,fill] (ell4) ellipse (5pt and 7pt);
\draw[thick] (ell4) ellipse (5pt and 7pt);
\nodeelltxt{ell4}{0};
\draw[thick,dotted,red] (ell5) ellipse (13pt and 7pt);
\nodeelltxt{ell5}{4};
\draw[thick,magenta,fill] (ell6) ellipse (17pt and 5pt);
\draw[thick] (ell6) ellipse (17pt and 5pt);
\nodewhtxt{ell6}{6};
\draw[thick] (a)--(b)--(d)--(c)--(a);
\draw[thick]  (c)--(e)--(f)--(d);
\draw[thick]  (e)--(g)--(h)--(f);
\draw[thick]  (g)--(i)--(j)--(h);
\draw[thick]  (i)--(k)--(l)--(j);
\draw[thick]  (k)--(m)--(n)--(l);
\draw[thick]  (uk)--(ul);
%\lcirc {o};\lcirc{q};\lcirc{p};\lcirc{r};
\nodeutxt{s}{$i=5$};
% End of 2nd box
%
%
%
%
% 3rd box:
\draw(3*\whh+\hot,\vot) coordinate (a);
\draw(3*\whh+\hot+\wit,\vot) coordinate (b);
\draw(3*\whh+\hot,\vot+\het) coordinate (c);
\draw(3*\whh+\hot+\wit,\vot+\het) coordinate (d);
\draw(3*\whh+\hot,\vot+2*\het) coordinate (e);
\draw(3*\whh+\hot+\wit,\vot+2*\het) coordinate (f);
\draw(3*\whh+\hot,\vot+3*\het) coordinate (g);
\draw(3*\whh+\hot+\wit,\vot+3*\het) coordinate (h);
\draw(3*\whh+\hot,\vot+4*\het) coordinate (i);
\draw(3*\whh+\hot+\wit,\vot+4*\het) coordinate (j);
\draw(3*\whh+\hot,\vot+5*\het) coordinate (k);
\draw(3*\whh+\hot+\wit,\vot+5*\het) coordinate (l);
\draw(3*\whh+\hot,\vot+6*\het) coordinate (uk);
\draw(3*\whh+\hot+\wit,\vot+6*\het) coordinate (ul);
\draw(3*\whh+\hot,\vot+6.75*\het) coordinate (m);
\draw(3*\whh+\hot+0.5*\wit,\vot+6.80*\het) coordinate (s);
\draw(3*\whh+\hot+\wit,\vot+6.75*\het) coordinate (n);
\draw(3*\whh+\hot+0.5*\wit,\vot+0.5*\het) coordinate (ell0);
\draw(3*\whh+\hot+0.5*\wit,\vot+1.5*\het) coordinate (ell1);
\draw(3*\whh+\hot+0.5*\wit,\vot+2.5*\het) coordinate (ell2);
\draw(3*\whh+\hot+0.5*\wit,\vot+3.5*\het) coordinate (ell3);
\draw(3*\whh+\hot+0.5*\wit,\vot+4.5*\het) coordinate (ell4);
\draw(3*\whh+\hot+0.5*\wit,\vot+5.5*\het) coordinate (ell5);
\draw(3*\whh+\hot+0.5*\wit,\vot+6.375*\het) coordinate (ell6);
\draw[thick,green,fill] (ell0) ellipse (11pt and 7pt);
\draw[thick] (ell0) ellipse (11pt and 7pt);
\nodeelltxt{ell0}{3};
\draw[thick,green,fill] (ell1) ellipse (19pt and 7pt);
\draw[thick] (ell1) ellipse (19pt and 7pt);
\nodeelltxt{ell1}{7};
\draw[thick,green,fill] (ell2) ellipse (9pt and 7pt);
\draw[thick] (ell2) ellipse (9pt and 7pt);
\nodeelltxt{ell2}{2};
\draw[thick,green,fill] (ell3) ellipse (21pt and 7pt);
\draw[thick] (ell3) ellipse (21pt and 7pt);
\nodeelltxt{ell3}{8};
\draw[thick,dotted,red] (ell4) ellipse (15pt and 7pt);
\nodeelltxt{ell4}{5};
\draw[thick,magenta,fill] (ell5) ellipse (5pt and 7pt);
\draw[thick] (ell5) ellipse (5pt and 7pt);
\nodewhtxt{ell5}{0};
\draw[thick,magenta,fill] (ell6) ellipse (17pt and 5pt);
\draw[thick] (ell6) ellipse (17pt and 5pt);
\nodewhtxt{ell6}{6};
\draw[thick] (a)--(b)--(d)--(c)--(a);
\draw[thick]  (c)--(e)--(f)--(d);
\draw[thick]  (e)--(g)--(h)--(f);
\draw[thick]  (g)--(i)--(j)--(h);
\draw[thick]  (i)--(k)--(l)--(j);
\draw[thick]  (k)--(m)--(n)--(l);
\draw[thick]  (uk)--(ul);
%\lcirc {o};\lcirc{q};\lcirc{p};\lcirc{r};
\nodeutxt{s}{$i=4$};
% End of 3rd box
%
%
%
\end{tikzpicture}
\caption{Illustrating the proof of Proposition \ref{prop:hYmrbjkSbCklsh} with $\FSP 836$; $h=26$, $n=54$; in each fundamental pair, the set component is the union of the color-filled solid ovals.}
\label{figtwo}
\end{figure}

For a fundamental pair $(i,B)$, let 
\begin{equation}
U(i,B):=\set{B\cup X:  X\subseteq M_i\text{ and } a<|X|<b}.
\label{eq:kfbKmGwrjCltr}
\end{equation}
Clearly, $U(i,B)$ is a copy of $\FSP r a b$. The role of a red dotted oval in  Figure \ref{figtwo} is to represent one of the sets  $X$ in \eqref{eq:kfbKmGwrjCltr}.  Now that we have defined our construction, we have to prove that the number of fundemental pairs is $\fprab  p r a b(n)$ and for different fundamental pairs $(i,B)$ and $(i',B')$, $U(i,B)$ and $U(i',B')$ are unrelated.

To obtain a fundamental pair $(i,B)$, first we choose $i\in\set{0,\dots,q-1}$; this explains the outer summation sign in \eqref{eq:fprab}. Then for each $j\in\set{0,\dots,a,b,\dots, r}$ we chose the number $v_j$ of the $j$-element green-filled solid ovals. As there are $i$ green-filled solid ovals, the choice of the vector formed from these $v_j$'s  is not quite arbitrary; this explains  the subscript of the inner summation sign in  \eqref{eq:fprab}.  For example, on the right (that is, in the $i=4$ part) of Figure \ref{figtwo}, $\vec v=(v_0,\dots,v_3;v_6,v_7,v_8)=(0,0,1,1;0,1,1)$. The fraction in  \eqref{eq:fprab} is the multinomial coefficient showing  how many ways $v_0$ zeros, $v_1$  1's, \dots, $v_a$ $a$'s, $v_b$ $b's$, \dots, $v_r$  $r$'s can be ordered. On the right of the figure, this is how many ways the numbers 3, 7, 2,  8 can be written below the red dotted oval (the figure shows only one of these ways).  As there is no stipulation on the magenta-filled solid ovals, the 
binomal coefficient in the middle of  \eqref{eq:fprab} gives the number of possible unions of the magenta-filled solid ovals, that it, it shows  how many ways the system of these ovals can be chosen.

For $j\in \set{0,\dots,a,b,\dots,r}$, a $j$-element subset (green-filled solid oval) of an $r$-element block $M_{t}$ can be chosen in $\ibinom r j$ ways. As there are $v_j$ such subsets and there are several values of $j$, the product in the last row of  \eqref{eq:fprab} is the number how many ways the systems of the green-filled solid ovals can be chosen. Therefore, $\fprab p r a b (n)$ is the number of fundamental pairs as required.

Next, let $(i,B)\neq (i',B')$ be distinct fundamental pairs, $Y=B\cup X\in U(i,B)$, and $Y'=B'\cup X'\in U(i',B')$. For the sake of contradiction, suppose that $Y\subseteq Y'$. If  we had that $i=i'$, then 
$B=(M\setminus M_i)\cap Y \subseteq (M\setminus M_i)\cap Y'= (M\setminus M_{i'})\cap Y'=B'$, which together with $|B|=h=|B'|$ would give that $B=B'$ and so $(i,B)=(i',B')$,  a contradiction. Hence, $i\neq i'$. 
Observe that $Y\subseteq Y'$ gives that $M_j\cap Y\subseteq M_j\cap Y'$ for all $j\in\set{0,\dots, q}$. Furthermore, $M_j\cap Y=B_j$ for $j\neq i$ while  $M_i\cap Y=X$. Similarly, $M_j\cap Y'=B'_j$ for $j\neq i'$ while  $M_{i'}\cap Y'=X'$. Hence, $B_j\subseteq B'_j$ and so   $|B_j|\leq |B'_j|$  for $ j\in\set{0,\dots,q}\setminus\set{i,i'}$, implying that
\begin{equation}
z:=\sum_{ j\in\set{0,\dots,q}\setminus\set{i,i'}} |B_j| \leq \sum_{ j\in\set{0,\dots,q}\setminus\set{i,i'}} |B'_j|=: z'.
\label{eq:kgWsvdshbRmv}
\end{equation}
As  $X$ is medium-sized, $B'_{i}$  is extremal, and $X=M_i\cap Y\subseteq M_i\cap Y'=B'_i$, we have that $B'_i$ is large, that is, $b\leq |B'_i|$.  Hence, \eqref{eq:kgWsvdshbRmv} gives that $z'+b\leq z'+|B'_i|=|B'|$. Similarly, $X'$ is medium-sized, $B_{i'}$ is extremal, and $B_{i'}=M_{i'}\cap Y\subseteq M_{i'}\cap Y'=X'$, whence $B_{i'}$ is small, that is, $|B_{i'}|\leq a$. Thus, $|B|=z+|B_{i'}|\leq z+a$. 
Combining the inequalities $a<b$, $|B| \leq z+a$, $z'+b\leq  |B'|$, and \eqref{eq:kgWsvdshbRmv}, we obtain that
\begin{equation*}
|B| \leq z+a <  z+b \leq z'+b\leq |B'|.
\end{equation*}
This strict inequality contradicts (F1),   completing the proof of Proposition \ref{prop:hYmrbjkSbCklsh}.
 \end{proof}
 
Several ideas and ingredients of the proof above, like the way of  partitioning the base set into blocks, are contained in Dove and Griggs \cite{dovegriggs} and  Katona and Nagy \cite{KatonaNagy}. However, even if the construction given in  \cite{dovegriggs} was tailored to our particular posets $U$, (F1) would fail.
The following assertion says that the lower estimate given in Proposition \ref{prop:hYmrbjkSbCklsh} is  \emph{asymptotically} as good as possible.

 \begin{proposition}\label{prop:snMhrRgntlJ}
For $r\in\Nfromwhat 3$ and  $0\leq a<b\leq r\in \Nplu$ such that $a+2\leq b$, $\frab r a b(n)$ and, for any fixed $p\in\mathbb Z$, $\fprab p r a b(n)$ are asymptotically $\Sp(\FSP r a b,n)$ as $n\to\infty$.
\end{proposition}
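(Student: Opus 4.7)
The plan is to establish matching asymptotic upper and lower bounds. The upper bound is essentially immediate: by Proposition~\ref{prop:hYmrbjkSbCklsh} and definition \eqref{eq:frab},
\[
\fprab p r a b(n) \;\leq\; \frab r a b(n) \;\leq\; \Sp(\FSP r a b, n),
\]
while the Katona--Nagy/Dove--Griggs asymptotic \eqref{eq:DGKNasymp} gives $\Sp(\FSP r a b, n) \sim \fsp(n)/|\FSP r a b|$, with $|\FSP r a b| = \sum_{j=a+1}^{b-1}\binom{r}{j}$. Hence it suffices to prove the matching lower bound $\fprab p r a b(n) \gtrsim \fsp(n)/|\FSP r a b|$ for any single fixed $p\in\mathbb Z$; the same estimate for $\frab r a b$ then follows from $\frab r a b(n) \geq \fprab p r a b(n)$.

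\textbf{Rewriting as coefficient extractions.} Set
\[
E_r(x) := \sum_{j\leq a}\binom{r}{j}x^j + \sum_{j\geq b}\binom{r}{j}x^j
\quad\text{and}\quad
F_U(x) := (1+x)^r - E_r(x) = \sum_{j=a+1}^{b-1}\binom{r}{j}x^j,
\]
so that $F_U(1) = |\FSP r a b|$ and $E_r(1) = 2^r - |\FSP r a b|$. The nested double sum in \eqref{eq:fprab} equals $\sum_{i=0}^{q-1}[x^h]\,E_r(x)^i(1+x)^{n-(i+1)r}$, where $h := p + \lint{(n-r)/2}$ and $q := \lint{n/r}$; summing this finite geometric series gives
\[
\fprab p r a b(n) = \underbrace{[x^h]\,\frac{(1+x)^n}{F_U(x)}}_{=: A(n)} \;-\; \underbrace{[x^h]\,\frac{(1+x)^{n-qr}\,E_r(x)^q}{F_U(x)}}_{=: B(n)}.
\]
For the main term $A(n)$, I would decompose $1/F_U(x)=x^{-(a+1)}/\tilde F(x)$, where $\tilde F(x):=F_U(x)/x^{a+1}$ is a polynomial with $\tilde F(0)=\binom{r}{a+1}\neq 0$, and apply partial fractions to $1/\tilde F(x)$. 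Each simple factor expands as a geometric series in $x$; convolving with $(1+x)^n$ and applying \eqref{eq:vZvsznRzbnLjdnkpst} termwise produces a finite sum of binomial coefficients centred near $\lint{n/2}$, whose total is
\[
A(n)\;\sim\;\frac{1}{F_U(1)}\,\binom{n}{h}\;\sim\;\frac{\fsp(n)}{|\FSP r a b|},
\]
since $h$ differs from $\lint{n/2}$ by a bounded amount.

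\textbf{Error term and the main obstacle.} It remains to show $B(n) = o(\fsp(n))$. The decisive input is $G(1) := E_r(1)/2^r = 1 - |\FSP r a b|/2^r < 1$: the crude total-mass bound $E_r(1)^q\cdot 2^{n-qr} = G(1)^q\cdot 2^n = \alpha^q\cdot 2^n$ (for some $\alpha\in(0,1)$) is $o(2^n/\sqrt n) = o(\fsp(n))$ because $q=\lint{n/r}\to\infty$. \textbf{The main technical obstacle} is transferring this total-mass estimate to the single coefficient $B(n)$, because $1/F_U(x)$ can have poles on or inside the unit circle (for instance, $F_U(x)=3x(1+x)$ in the case $\FSP 3 0 3$ puts a pole at $x=-1$), so the Laurent coefficients of $1/F_U(x)$ may be signed and a naive triangle-inequality argument is inadequate. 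The cleanest workaround is a saddle-point / contour estimate at the saddle point $x_0 = h/(n-h) \approx 1$: on a short closed arc through $x_0$ (locally deformed to avoid any nearby poles of $1/F_U$), the factor $1/F_U(x)$ is bounded, $|E_r(x)|/|1+x|^r\leq G(1)+o(1)$, and $|(1+x)^n/x^{h+1}|$ is of order $\binom{n}{h}$, so $|B(n)|=O\bigl(G(1)^q\cdot\binom{n}{h}\bigr)=o(\fsp(n))$. Combining this with the main-term asymptotic and the upper bound from \eqref{eq:DGKNasymp} closes the sandwich at $\fprab p r a b(n)\sim\frab r a b(n)\sim\fsp(n)/|\FSP r a b|$, proving the proposition.
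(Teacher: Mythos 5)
Your generating-function route is genuinely different from the paper's. The reformulation $\fprab p r a b(n)=\sum_{i=0}^{q-1}[x^h]E_r(x)^i(1+x)^{n-(i+1)r}=A(n)-B(n)$ is correct and tidy, and I verified the geometric-series telescoping. The paper, by contrast, never forms a generating function: it replaces each $\ibinom{n-(i+1)r}{\cdots}$ in the double sum \eqref{eq:fprab} by $\fsp(n)\cdot 2^{-(i+1)r}$ up to a factor $\kappa\approx 1$ (using \eqref{eq:vZvsznRzbnLjdnkpst}), applies the multinomial theorem to collapse the inner sum over $\vec v$, and is left with the geometric series $\frac{\fsp(n)}{2^r}\sum_{i<q}\bigl(\frac{2^r-s}{2^r}\bigr)^i\to\fsp(n)/s$. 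Because every term in that argument is nonnegative, the comparison is painless and no complex analysis enters.

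That is exactly what your route lacks, and it leaves a gap. After writing $1/F_U(x)=x^{-(a+1)}/\tilde F(x)$ and expanding $1/\tilde F$ via partial fractions and geometric series, the coefficient sequence $(c_k)$ of $1/\tilde F$ is \emph{not} eventually zero, so $A(n)=\sum_{k\geq 0}c_k\,\ibinom n{h+a+1-k}$ is an infinite sum, not the ``finite sum of binomial coefficients'' you assert; moreover $\tilde F$ can vanish on the unit circle, so $(c_k)$ need not even decay. Already for $\FSP 303$ one has $F_U(x)=3x(1+x)$, $\tilde F(x)=3(1+x)$, and $c_k=(-1)^k/3$: the alternating sum has enormous cancellation (the true value $A(n)=\tfrac13\ibinom{n-1}{h+1}\sim\fsp(n)/6$ is a $\Theta(1/\sqrt n)$-fraction of $\sum_k|c_k|\,\ibinom n{h+1-k}$), and applying \eqref{eq:vZvsznRzbnLjdnkpst} termwise to such a non-absolutely-convergent series is not justified. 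The contour/saddle estimate you propose for $B(n)$ is therefore needed for $A(n)$ as well, and neither is carried out: you would still have to choose a contour along which $(1+x)^n/x^{h+1}$ is peaked, keep it away from the zeros of $F_U$ (which for these posets sit on the unit circle), and check that $|E_r(x)/(1+x)^r|\leq G(1)+o(1)$ holds uniformly along it. The skeleton is correct and the $A(n)-B(n)$ decomposition is an appealing structural insight, but as written the proof is less complete than, and more analytically demanding than, the paper's elementary positivity-based argument.
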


\begin{proof} With  $s:=|\FSP r a b|$, 
$s=2^r- \binom{r}{0}-\dots-\binom{r}{a}- \binom{r}{b}-\dots-\binom{r}{r}$. Let $\kappa$ be a real number such that $\kappa< 1$ but $1-\kappa$ is very little. 
As we have that $\sum_{i=0}^\infty ((2^r-s)/2^r)^i = 2^r/s$, we can pick an $n_0\in\Nplu$ such that 
\begin{equation}
\kappa\cdot 2^r/s \leq \sum_{i=0}^{\lint{n/r}-1}  ((2^r- s)/2^r)^i \leq \frac 1\kappa  2^r/s\text{ for all }n\text{ such that }n\geq n_0.
\label{eq:kmsgtsvznlkflMrjK}
\end{equation}
It suffices to deal with $\fprab p r a b$ for a fixed  $p\in\mathbb Z$. Using \eqref{eq:vZvsznRzbnLjdnkpst}, we can pick an $n_1\geq n_0$ such that  
\begin{equation}
\begin{aligned}
\kappa &\cdot\fsp(n)\cdot 2^{-(i+1)r} \cr
&\leq \binom{n-(i+1)r}{p+\lint{(n-r)/2}-0v_0-1v_1-\dots -av_a-bv_b-\dots - rv_r} \cr
&\leq  \frac 1\kappa \cdot  \fsp(n)\cdot 2^{-(i+1)r}
\end{aligned}
\label{eq:sblvnHgnKrsTbk}
\end{equation}
for all $n\geq n_1$.  Let us define an auxiliary function for $n\geq n_1$ and apply the multinomial theorem to it as follows.
  \begin {align}
 \faux&(n):=\sum_{i=0}^{\lint{n/r}-1} \sum_{{\sct{\vec v\in\set{0,\dots,i}^{r+a-b+2}}\atop{\sct{v_0+\dots+v_a+v_b+ \dots+v_{r}=i}}}}
  \frac{i!}{v_0!\dots v_a! \cdot v_b!\dots  v_r!}{\,\,}\times \cr
 &\times
 \fsp(n)\cdot 2^{-(i+1)r} \binom{r}{0}^{\kern -3pt v_0} \dots \binom{r}{a}^{\kern -3pt v_a}
 \cdot  \binom{r}{b}^{\kern -3pt v_b}\dots \binom{r}{r}^{\kern -3pt v_r}
\label{eq:nphszprcvMNsTs}\\
 &=  \frac{\fsp(n)}{2^r}\sum_{i=0}^{\lint{n/r}-1} (2^{r})^{-i}  \Bigl(  \binom{r}{0}+\dots+\binom{r}{a}+ \binom{r}{b}+\dots+\binom{r}{r} \Bigr)^i
 \cr
  &=  \frac{\fsp(n)}{2^r}\sum_{i=0}^{\lint{n/r}-1}  \Bigl(\frac {2^r-s}{2^r}\Bigr)^i.
  \label{eq:smrszksWbntnkPnk}
\end{align}
Comparing \eqref{eq:fprab}, \eqref{eq:sblvnHgnKrsTbk}, and \eqref{eq:nphszprcvMNsTs}, we obtain that  $\kappa \faux(n)  \leq \fprab p r a b(n)    \leq     \kappa^{-1} \faux(n)$ holds for all  $n\geq n_1$. Applying \eqref{eq:kmsgtsvznlkflMrjK} to the sum in  \eqref{eq:smrszksWbntnkPnk}, it follows that  $\kappa \fsp(n)/s \leq \faux(n) \leq \frac 1\kappa \fsp(n)/s $. Substituting this pair of inequalities into the previous one, we have that $\kappa^2\fsp(n)/s\leq \fprab p r a b(n) \leq \kappa^{-2}\fsp(n)/s$ for all $n\geq n_0$.  Letting $\kappa\to 1$, it follows that $\fprab p r a b(n)$ is asymptotically $\fsp(n)/s$. So is $\Sp(\FSP r a b,n)$  by Dove and Griggs \cite{dovegriggs} and Katona and Nagy \cite{KatonaNagy}. By transitivity, we obtain the required asymptotic equality.
 The proof of Proposition \ref{prop:snMhrRgntlJ} is complete.
\end{proof}

\section{Pairs of estimates}
For $n\in\Nfromthree$, take the following ``discrete $4$-dimensional simplex''\begin{equation}
\begin{aligned}
H_4(n):=\set{(t,x_1,x_2,x_3)\in \Nnul^4: x_1>0,\,\,  x_2>0,\,\,  x_3>0,\,\,\cr
 t+x_1+x_2+x_3\leq n}.
\end{aligned}
\label{eq:cskzgSzbnzbW}
\end{equation} 
Remembering that $[3]:=\set{1,2,3}$, define the function $\fha\colon H_4(n)\to \Nnul$ by
\begin{equation}
\begin{aligned}
\fha(t,x_1,x_2,x_3)&=\sum_{j\in[3]} (t+x_j)!\cdot(n-t-x_j)!\cr
&+\sum_{\set{j,u}\subseteq[3],\,\, j\neq u} (t+x_j+x_u)!\cdot(n-t-x_j-x_u)!\cr
&- \sum_{(j,u)\in[3]\times [3],\,\, j\neq u} (t+x_j)!\cdot x_u!\cdot(n-t-x_j-x_u)!\,,
\end{aligned}
\label{eq:hlnCsvpFd}
\end{equation}
and let 
\begin{equation}
M_n:=\min\set{\fha(t,x_1,x_2,x_3):  (t,x_1,x_2,x_3)\in H_4(n)}.
\label{eq:prbrCJLqsLpd} 
\end{equation}
We also define the following three functions:
\allowdisplaybreaks{
\begin{align}
&\fgr r(n):=\left\lfloor \frac 1 2\fsp(n+2-r)  \right\rfloor,  \label{eq:thfGvb}\\
&\begin{aligned}
\fgha(n):=\lint{n!/M_n}, \text{ where }M_n\text{ is given in }\eqref{eq:prbrCJLqsLpd}, \text{ and}
\end{aligned}
 \label{eq:thfGvc}\\
 &\begin{aligned}
\fghb(n)=\Bigl\lfloor n! \cdot
\Bigl(3\cdot \lint{n/2}!\cdot\uint{n/2}! +
3\cdot \lint{(n+2)/2}!\cdot\uint{(n-2)/2}! \cr
- 6\cdot \lint{n/2}!\cdot \uint{(n-2)/2}!
\Bigr)^{-1} \Bigr\rfloor 
\end{aligned}
\label{eq:mclSszrR}
\end{align}
}%

Next, based on the notations and concepts given in \eqref{eq:Spnotat}, \eqref{eq:fwftspRtd}, Definition \ref{def:flp},   \eqref{eq:thfGvb}, \eqref{eq:thfGvc}, and \eqref{eq:mclSszrR}, we can formulate the main result of the paper.

\begin{theorem}\label{thm:main}
For $3\leq r\leq n\in\Nplu$ and $p\in\set{-r,-r+1,\dots, r-1,r}$,
 $\fgr r(n)$ is an upper  estimate while
\begin{align}
\fprab p r 0 r(n)&:=\sum_{i=0}^{\lint{n/r}-1} \sum_{j=0}^i \binom i j 
\binom{n-(i+1)r}{p+\lint{(n-r)/2}-jr} \,\,\text{ and}
\label{eq:fckZls}
\\
\flatrab r 0 r (n)&:=%\max\set{ \fprab p r 0 r(n): p\in\set{-r,-r+1,\dots,r-1,r}}
   \fprab 0 r 0 r(n) 
\label{eq:fmkVlphRg}
\end{align}
are lower estimates of $\Sp(\FSP r 0 r,n) = \Sp(\Jir{\FD r},n)$ on $\Nfromwhat r$. In particular, 
\begin{equation}
\text{for all }n\in \Nfromwhat r,\quad \flatrab  r 0 r(n) \leq  \Sp(\Jir{\FD r},n) \leq \fgr r(n).
\label{eq:vhbnkGlmBk}
\end{equation}
For $r=3$, in addition to the satisfaction of \eqref{eq:vhbnkGlmBk}, $\fgha(n)$ is also an upper estimate of $\Sp(\Jir{\FD 3},n)$ on $\Nfromwhat 3$. For $n\in\set{3,4,\dots,300}$,  $\fgha(n)=\fghb(n)\leq \fgr r(n)$; in fact,  $\fghb(n) <  \fgr r(n)$ for $n\in\set{5,6,\dots,300}$. The pair $( \flatrab  3 0 3,  \fgr 3 )$ is separated for $n\in\Nfromwhat 3$, and so are the pairs $( \flatrab  3 0 3,  \fghb)$  and $( \flatrab  3 0 3,  \fgha)$   for  $n\in\set{3,4,\dots,300}$. Finally, for $r\in\set{3,4,\dots,100}$, the pair $(\flatrab  r 0 r, \fgr r)$ is separated on the set $\set{r,r+1,\dots, 300}$.
\end{theorem}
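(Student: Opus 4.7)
My plan divides the theorem into three parts: (i) immediate consequences of Proposition~\ref{prop:hYmrbjkSbCklsh}, (ii) two fresh upper-bound arguments, and (iii) finite-range numerical checks.

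For the lower estimates \eqref{eq:fckZls}, \eqref{eq:fmkVlphRg} and the left half of \eqref{eq:vhbnkGlmBk}, I would specialize Proposition~\ref{prop:hYmrbjkSbCklsh} to $(a,b)=(0,r)$. The proposition's constraints hold because $r\ge 3$; the vector $\vec v=(v_0;v_r)$ has only two components tied by $v_0+v_r=i$; the multinomial coefficient in \eqref{eq:fprab} collapses to $\binom{i}{v_r}$; and $\binom r 0=\binom r r=1$. Renaming $j:=v_r$ produces \eqref{eq:fckZls}, and $p=0$ produces \eqref{eq:fmkVlphRg}. Lemma~\ref{lemma:wDwdRjRd} identifies $\Jir{\FD r}$ with $\FSP r 0 r$, so the left inequality of \eqref{eq:vhbnkGlmBk} follows.

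For the upper estimate $\fgr r(n)=\lfloor\fsp(n+2-r)/2\rfloor$, let $U_1,\dots,U_k$ be pairwise unrelated copies of $\FSP r 0 r$ in $\nPow$. The base observation is that from each $U_j$ one can pick an incomparable pair $(A^{(j)},Z^{(j)})$---for instance the images of $\{1\}$ and $[r]\setminus\{1\}$ under the embedding. Unrelatedness across copies and the in-copy incomparability make the $2k$-element set an antichain in $\nPow$, yielding $2k\le\fsp(n)$ by Sperner, which is too weak. To sharpen this to $2k\le\fsp(n+2-r)$, I would exploit the fact that each copy contains a saturated chain of $r-2$ covering relations (the canonical chain $\{1\}\subsetneq\{1,2\}\subsetneq\dots\subsetneq\{1,\dots,r-1\}$ in $\FSP r 0 r$); this height forces the extracted $2k$-antichain to occupy at most $n-(r-2)+1$ distinct levels, and a Lubell/LYM-type argument restricted to those levels yields the Sperner bound on an effective universe of $n+2-r$ elements. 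Pinning down this LYM refinement cleanly is the main technical step of this part.

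For the upper estimate $\fgha(n)$ in the case $r=3$, I would perform an LYM-type double count over the $n!$ maximal chains (equivalently permutations) of $\nPow$. A copy of $W_3$ in $\nPow$ has the canonical shape $A_i=T\cup X_i$ and $Z_i=T\cup X_{i'}\cup X_{i''}$ with $T,X_1,X_2,X_3$ pairwise disjoint and $\{i,i',i''\}=[3]$; its shape is parametrized by $(t,x_1,x_2,x_3)=(|T|,|X_1|,|X_2|,|X_3|)\in H_4(n)$. Any maximal chain meets at most one of the $k$ pairwise unrelated copies (two comparable elements from distinct copies would contradict unrelatedness), so chain--copy incidences total at most $n!$. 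The number of chains meeting a fixed copy of shape $(t,x_1,x_2,x_3)$ is exactly $\fha(t,x_1,x_2,x_3)$, by inclusion--exclusion: the first sum in \eqref{eq:hlnCsvpFd} counts chains through some minimal $A_i$ (each contributing $(t+x_i)!(n-t-x_i)!$), the second counts chains through some maximal $Z_i$, and the third subtracts chains through both $A_j$ and $Z_i$ with $j\ne i$---the count $(t+x_j)!\,x_u!\,(n-t-x_j-x_u)!$ comes from ordering $A_j$, then $Z_i\setminus A_j=X_u$ where $u$ is the third index, then the complement of $Z_i$. Hence $k\cdot M_n\le n!$, i.e., $k\le\fgha(n)$.

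The remaining assertions are finite-range inequalities among the closed forms \eqref{eq:fckZls} and \eqref{eq:thfGvb}--\eqref{eq:mclSszrR}. The equality $\fgha(n)=\fghb(n)$ on $\{3,\dots,300\}$ asserts that $M_n$ is attained at a specific point of $H_4(n)$ whose $\fha$-value is the denominator of \eqref{eq:mclSszrR}; I would locate this minimizer by a discrete gradient analysis on the faces of $H_4(n)$ followed by a finite verification. The inequalities $\fghb\le\fgr r$ (strict for $n\ge 5$) and the three separation claims reduce to elementary factorial/binomial inequalities, to be verified by computer over the stated finite ranges of $(n,r)$. Beyond the LYM refinement for $\fgr r$, the main expected difficulty is the precise identification of the minimizer of $\fha$: since $\fha$ is neither monotone nor manifestly convex on $H_4(n)$, the argument will combine boundary analysis with a numerical check over the enumerated candidate shapes.
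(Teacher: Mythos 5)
Your reduction of the lower estimates to Proposition~\ref{prop:hYmrbjkSbCklsh} with $(a,b)=(0,r)$ matches the paper, and your Lubell-style chain-counting framework for $\fgha$ is the right general tool. However, there are two substantive gaps.

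\textbf{The canonical-shape reduction is missing and is false as stated.} You assert that a copy of $\crn$ in $\nPow$ ``has the canonical shape'' $X_i=A_i\cup B_i$, $A_i=X_i\cap Y_i$, etc. This is not true for an arbitrary copy: e.g.\ in $\Pow{[5]}$, the sets $A=\set 1$, $B=\set 2$, $C=\set 3$, $X=\set{1,2,4}$, $Y=\set{1,3,4}$, $Z=\set{2,3,4}$ form a copy of $\crn$ with $X\neq A\cup B$. For a non-canonical copy, the number of maximal chains through it is \emph{not} $\fha(t,x_1,x_2,x_3)$ for any parameter tuple, so the inequality ``each copy eats up at least $M_n$ chains'' does not follow, and $kM_n\le n!$ is unproved. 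The paper fills this hole by showing that a system of $k$ pairwise unrelated copies can be \emph{modified in place} (replacing $X_i$ by $A_i\cup B_i$, then dually replacing $A_i$ by $X_i\cap Y_i$, justified by Gr\"atzer's Lemma 73) without destroying the copy property or pairwise unrelatedness, so that canonical shape \emph{may be assumed}. This modification argument is the crux of the $r=3$ upper bound and cannot be skipped.

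\textbf{The $\fgr r$ argument as sketched does not work.} Extracting one incomparable pair per copy gives a $2k$-element antichain and only $2k\le\fsp(n)$; your proposed sharpening via ``the antichain occupies at most $n-(r-2)+1$ levels'' is not sound, because elements of the antichain coming from different copies can sit at arbitrary, mutually unconstrained levels, so the height of a single copy does not restrict the level-spread of the whole antichain. The correct move is not to extract an antichain but to extract, from each copy, two \emph{unrelated saturated chains} of length $r-2$ (for instance the images of the chains inside $[\set{1},\set{1,3,\dots,r}]$ and $[\set{2},\set{2,3,\dots,r}]$ in $\ntPow{[r]}$). This yields $2k$ pairwise unrelated copies of an $(r-1)$-element chain, and the Griggs--Stahl--Trotter theorem gives $2k\le\fsp(n-(r-2))=\fsp(n+2-r)$ directly. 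Finally, note that the separation of $(\flatrab 303,\fgr 3)$ is claimed on all of $\Nfromwhat 3$, not a finite range, so it requires an analytic verification (the paper does an explicit binomial computation in two parity cases); a finite computer check does not suffice for that particular item.
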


Our computer-assisted computation, which took 952 seconds $\approx$ 16 minutes, shows that for $r\in\set{3,\dots,200}$ and $n\in  \set{r,\dots 300}$, $\flatrab r 0 r(n)$ defined in \eqref{eq:fmkVlphRg} 
is the same as $\frab r 0 r(n)$ ; see \eqref{eq:frab}. Since $\flatrab r 0 r(n)$ is easier to define and much easier to compute than $\frab r 0 r(n)$, it is the former that occurs in  Theorem \ref{thm:main}. However, it will be clear from its proof that the theorem holds with $\frab r 0 r$ in place of $\flatrab r 0 r$.

\begin{conjecture}\label{conj:nNrs}
We guess that   $\fgha(n) = \fghb(n)$ for all $n\in\Nfromwhat 3$ and  $\fghb(n) <  \fgr r(n)$ for all $\Nfromwhat 6$.
\end{conjecture}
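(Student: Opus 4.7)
The conjecture has two essentially independent parts, which I would handle separately.

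For the equality $\fgha(n)=\fghb(n)$, the task is to identify the minimum of $\fha$ from \eqref{eq:hlnCsvpFd} over the discrete simplex $H_4(n)$. I would guess, and then verify by direct substitution, that the minimizer is the symmetric boundary point $(t,x_1,x_2,x_3)=(\lint{n/2}-1,1,1,1)$ for $n\ge 4$ (the case $n=3$ forces $(0,1,1,1)$ and is checked by hand); using $\lint{(n+2)/2}=\lint{n/2}+1$ and $\uint{(n-2)/2}=\uint{n/2}-1$ one sees that $\fha$ at this point equals the denominator $D_n$ of \eqref{eq:mclSszrR}, yielding $M_n\le D_n$. For the substantial reverse direction $M_n\ge D_n$, my plan is a two-stage reduction. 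First, for each fixed $(x_1,x_2,x_3)$, exploit the log-concavity of $k\mapsto \binom{n}{k}$ (equivalently, unimodality of $k\mapsto k!(n-k)!$) to show that $\fha$ is minimized in $t$ at a value that makes $t+\min_j x_j$ as close to $\lint{n/2}$ as possible. Second, holding $t$ near this optimum, show that replacing any $x_i\ge 2$ by $1$ and compensating in $t$ strictly decreases $\fha$; the key observation is the identity
\begin{equation*}
(t+x_j)!\,x_u!\,(n-t-x_j-x_u)! = \frac{(t+x_j+x_u)!\,(n-t-x_j-x_u)!}{\binom{t+x_j+x_u}{x_u}},
\end{equation*}
together with the fact that the denominator $\binom{t+x_j+x_u}{x_u}$ is smallest at $x_u=1$, so the negative contribution of the subtracted mixed sum is largest precisely at the candidate minimizer. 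Telescoping these two reductions should yield a manifestly nonnegative expansion of $\fha(t,x_1,x_2,x_3)-D_n$ as a sum of factorial products.

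The main obstacle is the second stage: the subtracted mixed sum in \eqref{eq:hlnCsvpFd} partially cancels the decreases produced in the other two sums, so a naive term-by-term monotonicity argument fails and a careful pairing of terms across all three sums is required. The computer verification up to $n=300$ already claimed in Theorem~\ref{thm:main} makes the equality highly plausible; what the conjecture asks for is the uniform-in-$n$ algebraic version, and finding the right pairing is the genuinely creative step.

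For the inequality $\fghb(n)<\fgr r(n)$ on $\Nfromwhat 6$ (with $r=3$ implicit from the context of Theorem~\ref{thm:main}), the plan is a direct computation. Splitting on the parity of $n$ and using $m\cdot (m-1)!=m!$ yields the closed forms $D_{2m}=3m(2m-1)\bigl((m-1)!\bigr)^2$ and $D_{2m+1}=6m\,(m!)^2$, whence $n!/D_n=m\binom{2m}{m}/(3(2m-1))$ for $n=2m$ and $n!/D_n=(2m+1)\binom{2m}{m}/(6m)$ for $n=2m+1$, while $\fgr 3(n)$ simplifies to $\lint{\binom{2m}{m}/4}$ and $\lint{\binom{2m}{m}/2}$ in the two cases. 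Elementary arithmetic gives $n!/D_n<\fgr 3(n)$ for every $n\ge 5$, with an additive gap that grows roughly like $\binom{n}{\lint{n/2}}/n$, so the floor operations cause no difficulty and $\fghb(n)=\lfloor n!/D_n\rfloor<\fgr 3(n)$ holds for all $n\ge 5$, which is actually slightly stronger than what the conjecture asserts.
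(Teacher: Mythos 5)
This statement is a conjecture that the paper explicitly leaves open: the author writes that he could ``only guess but could not prove'' that $\fha$ is minimized on $H_4(n)$ at $(\lint{(n-2)/2},1,1,1)$, and the paper only verifies $\fgha(n)=\fghb(n)$ and $\fghb(n)<\fgr 3(n)$ by computer for $n\le 300$; there is no paper proof to compare against. Your plan for the equality $\fgha(n)=\fghb(n)$ is aimed at exactly this missing lemma, and you correctly identify that the hard part is that the subtracted mixed sum in \eqref{eq:hlnCsvpFd} undercuts any naive term-by-term monotonicity. The paper's own evidence confirms that this is a genuine obstruction and not a detail: the author reports that the three-variable surrogate $\fhb$ of \eqref{eq:mrNskDkfhb} already has several ``local minima'' on the discrete tetrahedron $H_3(n)$, so your proposed stage-one reduction (unimodality in $t$) and stage-two reduction (push each $x_i\ge 2$ down to $1$, compensating in $t$) will not telescope cleanly, and the ``right pairing of terms'' is a real open problem. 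The first half of your proposal therefore has the same gap the paper does and is not a proof.

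Your second half, proving $\fghb(n)<\fgr 3(n)$ for $n\ge 6$, is a genuine advance over the paper and looks correct. The closed forms $D_{2m}=3m(2m-1)\bigl((m-1)!\bigr)^2$ and $D_{2m+1}=6m\,(m!)^2$, the quotients $n!/D_n=m\binom{2m}{m}\big/\bigl(3(2m-1)\bigr)$ for $n=2m$ and $n!/D_n=(2m+1)\binom{2m}{m}/(6m)$ for $n=2m+1$, and the identities $\fgr 3(2m)=\lint{\binom{2m}{m}/4}$ and $\fgr 3(2m+1)=\binom{2m}{m}/2$ all check out, and the ratio comparison does give $n!/D_n<\binom{2m}{m}/4$ for even $n=2m$ with $m\ge2$ and $n!/D_n<\binom{2m}{m}/2$ for odd $n=2m+1$ with $m\ge2$. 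The one step you wave past is the floor: in the even case the additive gap $\binom{2m}{m}(2m-3)\big/\bigl(12(2m-1)\bigr)$ equals exactly $1$ at $m=3$, so the implication ``if $x<y-1$ then $\lfloor x\rfloor<\lfloor y\rfloor$'' only activates from $m\ge4$ onward and $n=6$ must be checked directly (it holds: $\fghb(6)=4<5=\fgr 3(6)$). With that small case patched in, the second assertion of the conjecture does follow by elementary arithmetic, which the paper did not establish for general $n$.
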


Example \ref{example:ABCDE} in Section \ref{sect:odd} will show that,  combining Theorem \ref{thm:main} with  Observation \ref{obs:nthNmb}, we can determine   $\gmin{\FD 3^k}$ exactly in many cases and we can give a good approximation for 
$\gmin{\FD r^k}$ quite often.

\begin {proof}[Proof of Theorem \ref{thm:main}]
Substituting $(i-j,j)$ for $(v_0,v_r)$ and observing that the multinomial coefficient becomes a binomial one, it is clear that $\fprab p r 0 r$ in \eqref{eq:fckZls} is a particular case of \eqref{eq:fprab}.  Hence, Lemma \ref{lemma:wDwdRjRd}, \eqref{eq:frab}, Proposition   \ref{prop:hYmrbjkSbCklsh},  and  \eqref{eq:fmkVlphRg} yield the first inequality in \eqref{eq:vhbnkGlmBk}. 

By its definition (and Lemma \ref{lemma:wDwdRjRd}), $\FSP r 0 r=\Jir{\FD r}\cong \ntPow{[r]}$. 
In each of the intervals $[\set{1}, \set{1,3,4,\dots,r}]$ and $[\set{2}, \set{2,3,4,\dots,r}]$, take a maximal chain; denote these two chains by $C'$ and $C''$. Clearly, $C'$ and $C''$ are unrelated chains of length $r-2$ and $C'\cong C''$. Let $n\in\Nfromwhat r$. With $k:=\Sp(\FSP r 0 r, n)$, there are $k$ pairwise unrelated copies of $\ntPow{[r]}\cong \FSP r 0 r$ in $\nPow$.  Therefore, there are $2k$ pairwise unrelated copies of $C'$ in $\nPow$. So $2k\leq \Sp(C',n)$. By Griggs, Stahl, and Trotter \cite{griggsatall}, $\Sp(C',n)=\fsp(n-(r-2))$. So $2k\leq \fsp(n+2-r)$, implying the second inequality in \eqref{eq:vhbnkGlmBk}.

In the rest of the proof, $r:=3$.  Let  $\Sym n$  stand for the set of all permutations of $[n]$.  For $\vs=(\sigma_1,\dots,\sigma_n)\in\Sym n$
and $i\in\set{0,1\dots,n}$, the $i$'s \emph{\acr initial \acr segment} of $\vs$ is $\inseg\vs  i:=\set{\sigma_j:j\leq i}$. For $X\in \nPow$, the \emph{\acr permutation \acr set} associated with $X$ is 
$\pset X:=\set{\vs\in\Sym n: X=\inseg\vs{|X|}}$.
The trivial fact that
\begin{equation}
\parbox{7cm}{ if $X,Y\in\nPow$ are incomparable (in notation, $X\parallel Y$), then $\pset X\cap\pset Y=\emptyset$}
\label{eq:sBrhCnGs}
\end{equation}
 was used first by  Lubell \cite{lubell}, and then by  Griggs, Stahl, and Trotter \cite{griggsatall}  and some other papers listed in the bibliographic section. To ease the notation, let $\crn:=\FSP 3 0 3$ and denote its elements by $A$, $B$, $C$, $X$, $Y$, $Z$ according to Figure \ref{figone}. 
Let $k:=\Sp(\crn,n)$, and 
let $\icrn 1,\dots,\icrn k$ be pairwise unrelated copies of $\crn$ in $\nPow$. For $\icrn i$, we use the notation $\icrn i=\set{A_i,B_i,C_i,X_i,Y_i,Z_i}$  in harmony with Figure \ref{figone}; for example, $A_i\subset X_i$ and $A_i\parallel Z_i$, etc.. We claim that $\icrn 1, \dots, \icrn k$ can be chosen so that, for all $i\in[k]$,
\begin{align}
X_i=A_i\cup B_i,\quad Y_i=A_i\cup C_i, \quad Z_i=B_i\cup C_i,
\label{eq:bRslkMna}
\\
A_i=X_i\cap Y_i,\quad B_i=X_i\cap Z_i,\quad C_i=Y_i\cap Z_i.
\label{eq:bRslkMnb}
\end{align}
Assume that the first equality in \eqref{eq:bRslkMna} fails. Let $X_i':=A_i\cup B_i$ and define $\icrn i{}':=(\icrn  i\setminus\set{X_i})\cup\set {X_i'}$.
If we had that $X_i'\subseteq Y_i$, then $B\subseteq X_i'\subseteq Y_i$ would be a contradiction. As $Y_i\subseteq X_i'$ would lead to $Y_i\subseteq X_i$ since $X_i'\subseteq X_i$, we conclude that $X_i'\parallel Y_i$. We obtain similarly that $X_i' \parallel Z_i$. So $\set{X_i',Y_i,Z_i}$ is an antichain, and now it follows easily that  $\icrn i{}'$ is a copy of $\crn$. For $j\in[k]\setminus\set i$ and $E\in \icrn j$, $E\subseteq X_i'$ would lead to $E\subseteq X_i$ while $X_i'\subseteq E$ to $A_i\subseteq E$. So $E\nparallel X_i'$ would lead to contradiction. Hence, $\icrn i{}'$ and $\icrn j$ are unrelated, showing that we can change $\icrn i$ to $\icrn i{}'$.  As there is an analogous treatment for $Y_i$ and $Z_i$, and we can take $i=1$, $i=2$, \dots, $i=k$ one by one,  \eqref{eq:bRslkMna} can be assumed. 

Recall that  Gr\"atzer \cite[Lemma 73]{GLTFound} asserts that whenever $a,b,c$ are elements of a lattice such that $\set{a\vee b, a\vee c, b\vee c}$ is a 3-element antichain, then this antichain generates an 8-element Boolean sublattice in which  $\set{a\vee b, a\vee c, b\vee c}$ is the set of coatoms. Therefore,  if we apply the dual of the procedure above (that is, if we replace $A_i$ by $X_i\cap Y_i$, etc.), then we reach \eqref{eq:bRslkMnb} without destroying the validity of \eqref{eq:bRslkMna}. We have shown that both \eqref{eq:bRslkMna} and \eqref{eq:bRslkMnb} can be assumed; so we assume them in the rest of the proof.

Let $T_i:=X_i\cap Y_i\cap Z_i$. By \eqref{eq:bRslkMnb}, $T_i$ is also the intersection of any two of $A_i$, $B_i$, and $C_i$. Hence, letting $\bul{A_i}:=A_i\setminus T_i$, $\bul{B_i}:=B_i\setminus T_i$,  and  $\bul{C_i}:=C_i\setminus T_i$,  it follows from \eqref{eq:bRslkMna}, \eqref{eq:bRslkMnb}, and  $\icrn i\cong \crn$ that $\bul{A_i}$, $\bul{B_i}$, and $\bul{C_i}$  are   pairwise disjoint subsets of $[n]$, none of them is empty, they are  disjoint from $T_i$, and 
\begin{equation}
\begin{aligned}
A_i&=T_i\cup\bul{A_i}, \quad B_i=T_i\cup\bul{B_i}, \quad C_i=T_i\cup\bul{C_i}, \cr
X_i&=T_i\cup\bul{A_i}\cup\bul{B_i},\quad
Y_i=T_i\cup\bul{A_i}\cup\bul{C_i},\quad
Z_i=T_i\cup\bul{B_i}\cup\bul{C_i}.
\end{aligned}
\label{eq:rcnDwrT}
\end{equation}
For $i\in [k]$, we let
\begin{equation}
G_i:=\pset{A_i}\cup\pset{B_i}\cup\pset{C_i}\cup\pset{X_i}\cup\pset{Y_i}\cup\pset{Z_i}.
\label{eq:frnCpTknTlj}
\end{equation}
As each of $A_i$,\dots,$Z_i$ is incomparable with each of $A_j$,\dots,$Z_j$ provided that $i\neq j$, \eqref{eq:sBrhCnGs} together with \eqref{eq:frnCpTknTlj} imply that 
\begin{equation}
\text{for }i,j\in[k],\
\text{ if }\,i\neq j\,\text{ then }\, G_i\cap G_j=\emptyset.
\label{eq:rmKhmpRdc}
\end{equation}
It follows from \eqref{eq:rmKhmpRdc},  $G_1\cup\dots\cup G_k\subseteq \Sym n$, and $|\Sym n|=n!$ that  
\begin{equation}
\sum_{i\in [k]}|G_i|\leq n!\,\,.
\label{eq:nkVnkzmPszjNl}
\end{equation}

Next, for $i\in[k]$, we focus on $|G_i|$. Denote $|T_i|$, $|\bul{A_i}|$, $|\bul{B_i}|$, and  $|\bul{C_i}|$ by  $t_i$,   $a_i$, $b_i$, and $c_i$, respectively. By \eqref{eq:rcnDwrT}, $|A_i|=t_i+a_i$, $|B_i|=t_i+b_i$, $|C_i|=t_i+c_i$, $|X_i|=t_i+a_i+b_i$,  $|Y_i|=t_i+a_i+c_i$, and $|Z_i|=t_i+b_i+c_i$. Observe that  $|\pset{A_i}|=(t_i+a_i)!\cdot (n-t_i-a_i)!$ since what the first $|A_i|=t_i+a_i$ components of $\vs=(\sigma_1,\dots,\sigma_n)\in \pset{A_i}$ form is set $A_i$ and they can be arranged in $(t_i+a_i)!$ many ways while the rest of the components of $\vs$ in the last $n-t_i-a_i$ positions in $(n-t_i-a_i)!$ many ways. We obtain similarly that   $|\pset{B_i}|=(t_i+b_i)!\cdot (n-t_i-b_i)!$,  $|\pset{C_i}|=(t_i+c_i)!\cdot (n-t_i-c_i)!$,  $|\pset{X_i}|=(t_i+a_i+b_i)!\cdot (n-t_i-a_i-b_i)!$, $|\pset{Y_i}|=(t_i+a_i+c_i)!\cdot (n-t_i-a_i-c_i)!$, and $|\pset{Z_i}|=(t_i+b_i+c_i)!\cdot (n-t_i-b_i-c_i)!$. 
%It is not sufficient to take the sum of these numbers to obtain $|G_i|$ as 
%The sets of permutations we have just considered are not pairwise disjoint. However, 
It follows from \eqref{eq:sBrhCnGs} that the intersection of any three of the six permutation  sets considered above is empty since there is no 3-element chain in $\icrn i$. By  \eqref{eq:sBrhCnGs} again, we need to take care of the intersections of two permutation sets associated with comparable members of $\icrn i$; there are six such intersections as the diagram of $\crn$ has exactly six edges; see Figure \ref {figone}. One of the just-mentioned six intersections is $\pset{A_i}\cap\pset{X_i}$. For a permutation $\vs\in \pset{A_i}\cap \pset{X_i}$, \eqref{eq:rcnDwrT} yields that there are $|A_i|!=(t_i+a_i)!$ possibilities to arrange the elements of $A_i$ in the first $|A_i|$ places,  $b_i!$ many possibilities to arrange the elements of $X_i\setminus A_i=\bul{B_i}$ in the next $b_i$ places, and $(n-t_i-a_i-b_i)!$ possibilities for the rest of entries of $\vs$. Hence, $|\pset{A_i}\cap\pset{X_i}|=(t_i+a_i)!\cdot b_i!\cdot (n-t_i-a_i-b_i)!$, and analogously for the other five intersections of two permutation sets.

The considerations above imply that for $i\in [k]$, $|G_i|=\fha(t_i,a_i,b_i,c_i)$; see \eqref{eq:hlnCsvpFd}.    As $(t_i,a_i,b_i,c_i)$ is clearly in $H_4(n)$,  \eqref{eq:prbrCJLqsLpd}  yields that $M_n\leq |G_i|$.  This fact and \eqref{eq:nkVnkzmPszjNl}  imply that $kM_n\leq \sum_{i\in[k]}|G_i| \leq n!$. Dividing  by $M_n$ and taking into account that $k\in\Nplu$, we obtain that $\Sp(\crn,n)=k\leq \lint{n!/M_n}= \fgha(n)$, as required.

We only guess but could not prove that for all $n\in\Nfromthree$,  $\fha$ takes its minimum on $H_4(n)$ at $(\lint{(n-2)/2},1,1,1)$; see also  Conjecture \ref{conj:nNrs}. However, we can reduce the computational difficulties by considering the following auxiliary function:
\begin{equation}
\begin{aligned}
\fhb &(t,x,y)=(t+x)!\cdot(n-t-x)! + (t+y)!\cdot(n-t-y)!\cr
&+2(t+x+y)!\cdot(n-t-x-y)!  -2(t+x)!\cdot y!\cdot(n-t-x-y)! \cr
& -2(t+y)!\cdot x!\cdot(n-t-x-y)!  \,\, .
\end{aligned}
\label{eq:mrNskDkfhb}
\end{equation}
The definition of $H_4(n)$, see \eqref{eq:cskzgSzbnzbW}, and 
\begin{equation}
2\fha(t,x_1,x_2,x_3)=\fhb (t,x_1,x_2) + \fhb (t,x_2,x_3) + \fhb (t,x_1,x_3)\, ,
\label{eq:ntsbGkwCFJvbtr}
\end{equation}
explain that we are interested in $\fhb $ on the first one of the following two sets,
\begin{align}
H_3(n)&:=\set{(t,x,y)\in \Nnul^3: x>0,\,\,  y>0,\,\,  
 t+x+y \leq n-1}\text{ and}
 \label{eq:t-hedron}\\
H_3'(n)&:=\set{(t,x,y)\in \Nnul^3: x>0,\,\,  y\geq x,\,\,  
 t+x+y \leq n-1}.
 \label{eq:rtmShrdrnb} 
\end{align}
In \eqref{eq:t-hedron},  the sum is only at most $n-1$ since the fourth variable of $\fha $, which does not occur in $\auxf$, is at least 1. The progress is that  $H_3(n)$ has much less elements than $H_4(n)$, and $H'_3(n)$ has even less; this is why we could reach  $300$ in Theorem \ref{thm:main}. (Note that a priori, it was not clear that when $2\fha (t,x_1,x_2,x_3)$ takes its minimum value, then so do all of its summands in \eqref{eq:ntsbGkwCFJvbtr}.)  Observe that since $\fhb $ is symmetric in its last two variables,
 \begin{equation}
 \min\set{\fhb (t,x,y):(t,x,y)\in H_3(n)}= \min\set{\fhb (t,x,y):(t,x,y)\in H'_3(n)}.
\label{eq:mRprckCRjt}
\end{equation}
A straightforward (computer algebraic) Maple program, which benefits from \eqref{eq:mRprckCRjt}, shows that
\begin{equation}
\parbox{8.6cm}{for $3\leq n\leq 300$, $\fhb $ takes its minimum  on the discrete tetrahedron $H_3(n)$ at $(t,x,y)=(\lint{(n-2)/2},1,1)$.}
\label{eq:pbxfJbcSplt}
\end{equation}
 (Note that $\auxf$ takes its minimum at two triples if $n$ is even  but only at a unique triple if $n$ is odd.)   For more about the program, see the (Appendix) Section \ref{sect:maple} in the extended 
 \href{https://arxiv.org/abs/2309.13783}{arXiv:2309.13783}  (or  \href{https://arxiv.org/abs/2309.13783v2}{arXiv:2309.13783v2})   version% \red{xxx}
 \footnote{\emph{This version} that we are reading is the extended \href{https://arxiv.org/abs/2309.13783}{arXiv:2309.13783} version.} 
 of the paper; at the time of writing, the program is also available from the 
 \href{http://www.math.u-szeged.hu/~czedli}{author's website}. Here we only mention that the computation for \eqref{eq:pbxfJbcSplt} took a bit more than seven hours.

If  $n\in\set{3,4,\dots,300}$ and  $(\lint{(n-2)/2},1,1,1)$ is substituted for $(t,x,y,z)$, then  each of the three summands in \eqref{eq:ntsbGkwCFJvbtr} takes its minimal value by \eqref{eq:pbxfJbcSplt}. This allows us to conclude that  at $(t,x,y,z)=(\lint{(n-2)/2},1,1,1)$, $\fha$ takes its minimum on $H_4(n)$.  Thus, for $n\in\set{3,4,\dots,300}$  and for $M_n$ defined in \eqref{eq:prbrCJLqsLpd},
\begin{equation}
\begin{aligned}
M_n&=\fha (\lint{(n-2)/2},1,1,1)=3\cdot \lint{n/2}!\cdot\uint{n/2}!  \cr
&+
3\cdot \lint{(n+2)/2}!\cdot\uint{(n-2)/2}!  
- 6\cdot \lint{n/2}!\cdot \uint{(n-2)/2}!\,\, .
\end{aligned}
\label{eq:prNsJsLsd} 
\end{equation}
Combining \eqref{eq:thfGvc}, \eqref{eq:prNsJsLsd}, and \eqref{eq:mclSszrR}, we obtain that $\fgha(n)=\fghb(n)$  for  $n$ belonging to the set $\set{3,4,\dots,300}$, as required.

Next,  to show that the pair $(\flatrab  3 0 3,\fgr 3)=(\fprab 0  3 0 3,\fgr3)$ is separating, we need to show that $\fprab 0 3 0 3(n+1)-\fgr3(n)\geq 0$ for all $n\in \Nfromthree$. Depending on the parity of $n$, there are two cases. If $n$ is of the form $n=2m+2$ then, reducing the sum in \eqref{eq:fckZls} to its summands corresponding to $(i,j)=(0,0)$ and $(i,j)=(1,0)$,  %{xxx ellenorizve:}
\begin{gather}
2\fprab 0 3 0 3(n+1)-2\fgr3(n)\geq 2\binom{2m}{m}+2\binom{2m-3}{m} - \binom{2m+1}{m}
\label{eq:szlfRmrkfld}\\
=\frac{2\cdot(2m)!}{m!\cdot m!} + \frac{2\cdot(2m-3)!}{m!(m-3)!} -\frac{(2m+1)!}{m!(m+1)!}\cr
=\frac{(2m-3)!}{m!(m+1)!}\cdot \alpha,\quad\text{ where}\quad \alpha=2(m+1)2m(2m-1)(2m-2)\cr
+2(m+1)m(m-1)(m-2)  - (2m+1)2m(2m-1)(2m-2)
\cr
=2m^4+4m^3-14m^2+8m=2m(m+4)(m-1)^2.
\end{gather}
Hence, both $\alpha$ and the fraction multiplied by $\alpha$ are non-negative for $ m\in\Nplu$. Thus, $\fprab 0 3 0 3(n+1)-\fgr3(n)\geq 0$  for $n\geq 4$ even.
Similarly, for $n=2m+1$ odd,  %{xxx ellenorizve:}
\begin{gather*}
2\fprab 0 3 0 3(n+1)-2\fgr3(n)\geq 
2\binom{2m-1}{m-1}+2\binom{2m-4}{m-1} - \binom{2m}{m}\cr
=\frac{(2m-4)!}{m!m!}\cdot  2m^2(m-1)(m-2).
\end{gather*}
Therefore, $\fprab 0 3 0 3(n+1)-\fgr3(n)\geq 0$ for  $2\leq m\in\Nplu$, that is, for $n\geq 5$ odd. For $n=3$,  $\fprab 0 3 0 3(n+1)-\fgr3(n)\geq 0$ is trivial; see also \ref{eq:szbklhrm}. 
We have shown that $(\flatrab 3 0 3,\fgr3)$ is separated.

The already mentioned Maple program  has computed $\fgr3(n)$, $\fgha(n)$, and $\fghb(n)$ for all $n\in\set{3,4,\dots,300}$. This computation proves that $\fghb(n)= \fgha(n)\leq \fgr3(n)$ for all these $n$ and $\fghb(n) =\fgha(n)<\fgr3(n)$ for $n\in\set{5,6,\dots,300}$. These inequalities and that $(\flatrab 3 0 3,\fgr3)$ is separated imply that $(\flatrab 3 0 3,\fgha)$ and  $(\flatrab 3 0 3,\fghb)$ are separated on $\set{3,4,\dots,300}$. The same Maple program has computed all the relevant $\flatrab r 0 r(n+1)$ and $\fgr r(n)$, from which we conclude that for $r\in\set{3,4,\dots,100}$, the pair $(\flatrab r 0 r, \fgr r)$ is separated on the set $\set{r,r+1,\dots, 300}$. The proof of Theorem \ref{thm:main} is complete.
\end{proof}

Some comments on this proof are appropriate here. While we could use quite a rough estimation in \eqref{eq:szlfRmrkfld} when proving that $(\flatrab 3 0 3,\fgr 3)$ is separating on the set $\Nfromwhat 3$, there is no similar possibility for  $(\flatrab r 0 r,\fgr r)$. Indeed, since  $\flatrab r 0 r(n+1)=\fgr r(n)$ for,  say, $(r,n)=(20,56)$ when $\flatrab {20}0{20}(56+1)=17\,672\,631\,900=\fgr{20}(56)$,  no estimation would be possible.  As $\fgr r(n)$ is far from being asymptotically good, it is not worth putting more work into its investigation. While  we could use  Gr\"atzer \cite[Lemma 73]{GLTFound}  to reach a pleasant situation for $r=3$, see \eqref{eq:bRslkMna} and \eqref{eq:bRslkMnb},we have no similar tool for $r>3$; this explains that Theorem \ref{thm:main} does not tell too much about upper estimates in case of $r>3$. Finally, note that  even though $\fhb$ in \eqref{eq:mrNskDkfhb} is simpler than $\fha$  in \eqref{eq:hlnCsvpFd}, the three-variate function $\fhb$ is still too complicated. In particular, we know from computer-assisted calculations that $\fhb$ has several ``local minima'' on the discrete tetrahedron $H_3(n)$ defined in  \eqref{eq:t-hedron}; this is our excuse that we could verify Conjecture \ref{conj:nNrs} only for $n\leq 300$ and only with a computer.

\section{Odds and ends, including some computational results}
\label{sect:odd}
Theorem \ref{thm:main} pays no attention to the case  $r=2$, which is trivial by the following remark.  As in \eqref{eq:thfGvb},   $\fgr 2(n):=\lint{\fsp(n)/2}=\lint{\ibinom{n}{n/2}/2}$.

\begin{remark} \label{rem:mlGjKtknvZp}
For $n\in\Nfromwhat 2$,   $\Sp(\Jir{\FD 2},n)=\fgr 2(n)$.
\end{remark}

\begin{proof} By Lemma  \ref{lemma:wDwdRjRd} or trivially, $\Jir{\FD 2}$ is the two-element antichain.  Hence, Remark \ref{rem:mlGjKtknvZp} follows from  Sperner's theorem; see \eqref{eq:fspnotat}. 
\end{proof}

\begin{corollary}\label{corol:kHhsBd}
 For  $ r\in\Nfromwhat 3$ and $k\in\Nfromwhat 2$,   let $n\in\Nplu$ be the smallest integer such that $k\leq \flatrab r 0 r(n)$; see \eqref{eq:fmkVlphRg}. Then for every distributive lattice $D$ generated by $r$ elements,  the direct power $D^k$ has an at most $n$-element generating set.
\end{corollary}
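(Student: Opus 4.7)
The plan is to reduce the statement about arbitrary $r$-generated distributive lattices to the free case via a homomorphism argument, and then to combine Theorem \ref{thm:main} with the already-quoted \eqref{eq:zkRspHjTKc}.

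First I would invoke the universal property of the free distributive lattice: since $D$ is generated by some $r$ elements $d_1,\dots,d_r$, there is a (necessarily surjective) lattice homomorphism $\phi\colon \FD r\to D$ sending the free generators onto $d_1,\dots,d_r$. Taking components, this lifts to a surjective lattice homomorphism $\phi^k\colon \FD r^k\to D^k$. Since the image of a generating set under a surjective homomorphism is again a generating set, we obtain $\gmin{D^k}\leq \gmin{\FD r^k}$.

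Next I would bound $\gmin{\FD r^k}$ using the paper's earlier results. By Lemma \ref{lemma:wDwdRjRd}, $\Jir{\FD r}\cong \FSP r 0 r$, and by Theorem \ref{thm:main} the function $\flatrab r 0 r$ is a lower estimate of $\Sp(\FSP r 0 r,-)=\Sp(\Jir{\FD r},-)$ on $\Nfromwhat r$. Consequently, the hypothesis $k\leq \flatrab r 0 r(n)$ already implies $k\leq \Sp(\Jir{\FD r},n)$; note that here $n\geq r$ holds automatically, because $\flatrab r 0 r$ takes only trivially small values for smaller arguments while $k\geq 2$. Applying \eqref{eq:zkRspHjTKc} to $L=\FD r$ now yields $\gmin{\FD r^k}\leq n$, and chaining this with the inequality from the previous step gives $\gmin{D^k}\leq n$, which is exactly the assertion of the corollary.

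I do not foresee any real obstacle: once Theorem \ref{thm:main} is available, the corollary is just the observation that an $r$-generated distributive lattice is a homomorphic image of $\FD r$, combined with the elementary fact that direct powers and surjective homomorphisms are compatible enough for generating-set sizes to be transported from $\FD r^k$ down to $D^k$. The existence of the smallest $n$ required in the statement is not an issue either, since the lower estimate $\flatrab r 0 r(n)$ grows without bound in $n$; this is immediate from the construction in the proof of Proposition \ref{prop:hYmrbjkSbCklsh} (or from the asymptotic Proposition \ref{prop:snMhrRgntlJ}).
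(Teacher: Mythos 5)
Your proposal is correct and follows essentially the same route as the paper's own proof: both combine the inequality $k\leq \flatrab r 0 r(n)\leq \Sp(\Jir{\FD r},n)$ from Theorem \ref{thm:main} with \eqref{eq:zkRspHjTKc} to bound $\gmin{\FD r^k}$ by $n$, and both transport the bound to $D^k$ via the surjective homomorphism $\phi^k\colon \FD r^k\to D^k$ induced by a surjection $\FD r\to D$. The only cosmetic difference is that the paper applies \eqref{eq:zkRspHjTKc} first and then pushes a concrete generating set forward, whereas you phrase the homomorphism step as the inequality $\gmin{D^k}\leq\gmin{\FD r^k}$ up front; this is immaterial.
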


\begin{proof}
Let $k$, $D$, and $n$ be as in the corollary. Since $k\leq \flatrab r 0 r(n)$ is included in the assumption and $ \flatrab r 0 r(n)\leq  \Sp(\Jir{\FD r},n)$ by Theorem \ref{thm:main},  it follows from \eqref{eq:zkRspHjTKc} that  $\FD r^k$ can be generated by an at most $n$ element subset $Y$.  Using that $\FD r$ is the \emph{free} $r$-generated distributive lattice, we can pick a surjective (in other words, onto) homomorphism $\phi\colon\FD r\to D$. Then $\phi^k\colon \FD r^k\to D^k$, defined by $(x_1,\dots,x_k)\mapsto(\phi(x_1),\dots,\phi(x_k))$, is also a surjective homomorphism. Thus, 
$\phi^k(Y)$ generates $D^k$ and  $|\phi^k(Y)|\leq |Y|\leq n$ proves  Corollary \ref{corol:kHhsBd}.
\end{proof}

The just-proved corollary and the abundance of large lattices that are easy-to-describe and easy-to-work-with motivate the following extension of the cryptographic ``protocol'' outlined in Cz\'edli \cite{czgDEBRauth} and, mainly, in \cite{czgboolegen}.  The purpose of the  quotient marks here is \emph{to warn the reader}: none of our protocols is fully elaborated and, thus, it does not meet the requirements of  nowadays' cryptology. In particular, neither a concrete method of choosing the master key according to some probabilistic distribution is given nor we have proved that the average case withstands attacks; we do not even say that we are close to meet these requirements.  On the other hand,  no rigorous average case analysis supports some widely used and, according to experience, safe cryptographic protocols like RSA and AES and, furthermore, many others rely ultimately on the \emph{conjecture} that the complexity class \tbf{P} is different from \tbf{NP}.  This is our excuse to tell a bit more about one of our motivations in Remark \ref{rem:kCpgCl} below.  
 For a lattice $L$ and $\vec h=(h_1,\dots,h_k)\in L^k$,   $\vec h$ is a ($k$-dimensional) \emph{generating vector} of $L$ if $\set{h_1,\dots, h_k}$ is a generating set of $L$. 

\begin{remark}\label{rem:kCpgCl} In the \emph{session key exchange protocol} given in Cz\'edli \cite{czgboolegen}\footnote{At the time of writing, the direct link is
\href{https://arxiv.org/abs/2303.10790v3}{https://arxiv.org/abs/2303.10790v3}, see (4.3) in version 3 of the paper.}, the secret master key known only by the communicating parties was a $k$-dimensional generating vector $\vec h$ of the $2^n$-element Boolean lattice $B_n$.  The point was that $\gmin{B_n}$ is small, and so there are very many $k$-dimensional generating vectors $\vec h$ if $k$ is a few times, say, seven  times larger than  $\gmin{B_n}$. Here we suggest to add \textup{(A)} or \textup{(B)} to the protocol outlined in \cite{czgboolegen} and to work in a lattice different from $B_n$.

\textup{(A)} Choose a medium-sized finite random poset $U$ and an exponent $n\in\Nplu$; for example, a 20-element random poset $U$ and $n=500$ are sufficient. (There are very many 20-element posets; see A000112  in Sloan \cite{sloan}; the direct link is \href{https://oeis.org/A000112}{https://oeis.org/A000112}.) By the well-known structure theorem of finite distributive lattices, see  Gr\"{a}tzer \cite[Theorem 107]{GLTFound}, $U$ determines a finite distributive lattice $D$. Then replace $B_n$ with $D^n$ in the \cite{czgboolegen}-protocol so that, in addition to $\vec h$, $U$ and $n$  also belong to the secret master key.

\textup{(B)} Choose a  random poset  $U$ of size 100 or so.  As in \cite{czgqoufilt}, this $U$ determines the huge lattice $(\textup{Quo}^{\leq}(U);\subseteq)$ of quasiorders extending $\leq_U$; this lattice  can be generated by few elements. Use this lattice instead of $B_n$. The poset  $U$ and a $k$-dimensional generating vector of $(\textup{Quo}^\leq(U);\subseteq)$  constitute the secret master key; otherwise the protocol is the same as in \cite{czgboolegen}.
\end{remark}

Next, we present some computational data;  Section \ref{sect:maple} will explain how these data were obtained by using a computer; at the ``$\approx$'' rows, the last decimals are correctly rounded. 
%red{xxx checked: \eqref{eq:szbklhrm}:4, \eqref{eq:szbklngy}3,  \eqref{eq:szbklot}3, \eqref{eq:cSmrWlKmlG}2, \eqref{eq:cmjsPjts}2}
%
\begin{equation}
\begin{tabular}{l|r|r|r|r|r|r|r|r|r|r}
$n=$ &  3& 4& 5& 6& 7& 8  \cr
\hline
$\flatrab 3 0 3(n)$ & 1 & 1& 2& 3& 6& 11\cr
\hline
$\fgha(n)=\fghb(n)$ & 1 & 1& 2 & 4& 7& 13\cr
\hline
$\fgr 3(n)$ & 1 & 1& 3 & 5& 10& 17\cr
\hline\hline
$n=$ &  9& 10& 11& 12& 13& 14   \cr
\hline
$\flatrab 3 0 3(n)$ & $24$ & $42$& $84$ & $153$& $306$& $570$ \cr
\hline
$\fgha(n)=\fghb(n)$ &$26$& $46$& $92$ & $168$& $333$& $616$ \cr
\hline
$\fgr 3(n)$ &$35$& $63$& $126$ & $231$& $462$& $858$ \cr
\hline\hline
$n=$ &  15& 16& 17& 18& 19& 20  \cr
\hline
$\flatrab 3 0 3(n)$ & 1146 & 2145& 4290& 8100& 16200& 30786\cr
\hline
$\fgha(n)=\fghb(n)$ & 1225 & 2288& 4558 & 8580& 17107& 32413\cr
\hline
$\fgr 3(n)$ & 1716 & 3217& 6435 & 12155& 24310& 46189\cr
\hline
\end{tabular} 
\label{eq:szbklhrm}
\end{equation}
\begin{equation}
\begin{tabular}{l|r|r|r|r|r|r|r|r|r|r}
$n=$ &  4& 5& 6& 7& 8& 9&10&11&12  \cr
\hline
$\flatrab 4 0 4(n)$ & 1 & 1& 2 & 3& 6& 10& 20& 36& 74\cr
\hline
$\fgr 4(n)          $ & 1 & 1& 3 & 5&10& 17& 35& 63& 126\cr
\hline\hline
$n=$ &  13& 14& 15& 16& 17& 18&19&20&21  \cr
\hline
$\flatrab 4 0 4(n)$ & 134 & 268& 496 & 992& 1856& 3712& 7004& 14014& 26598\cr
\hline
$\fgr 4(n)          $ & 231 & 462& 858 & 1716& 3217& 6435& 12155& 24310& 46189\cr
\hline
\end{tabular} 
\label{eq:szbklngy}
\end{equation}
\begin{equation}
\begin{tabular}{l|r|r|r|r|r|r|r|r|r|r}
$n=$ &   5& 6& 7& 8& 9&10&11&12&13  \cr
\hline
$\flatrab 5 0 5(n)$ & 1 & 1& 2 & 3& 6& 10& 20& 35& 70\cr
\hline
$\fgr 5(n)          $ & 1 & 1& 3 & 5&10& 17& 35& 63& 126\cr
\hline\hline
$n=$ &   14& 15& 16& 17& 18&19&20&21&22  \cr
\hline
$\flatrab 5 0 5(n)$ & 127 & 256& 471 & 942& 1758& 3516& 6620& 13240& 25095\cr
\hline
$\fgr 5(n)          $ & 231 & 462& 858 & 1716&3217& 6435& 12155& 24310& 46189\cr
\hline
\end{tabular} 
\label{eq:szbklot}
\end{equation}
\begin{equation}
\begin{tabular}{l|r|r|r|}
$n$ &298&299&300\cr
\hline
$\flatrab 3 0 3(n)\approx$ & $3.919\,720\cdot 10^{87}$ & $7.839\,440 \cdot 10^{87}$ & $1.562\,662 \cdot 10^{88}$\cr
\hline
$\fghb(n)\approx$& $ 3.932\,918 \cdot 10^{87}$& $7.865\,747 \cdot 10^{87}$& $1.567\,888\cdot 10^{88}$\cr
\hline
$\frac{\fghb(n)}{\flatrab 3 0 3(n)}\approx$& $ 1.003\,367\,003 $& $1.003\,355\,705 $ & $1.003\,344\,482$\cr 
\hline
\end{tabular}
\label{eq:cSmrWlKmlG}
\end{equation}
The computation for the following table took 306 seconds.
\begin{equation}
\begin{tabular}{l|r|r|r|}
$n$ &5\,999&6\,000\cr
\hline
$\flatrab {20} 0 {20}(n)\approx$ & $7.445\,882\,708\,069\cdot 10^{1797}$ & $1.489\,176\,541\,614 \cdot 10^{1798}$ \cr
\hline
$\fgr{20}(n)\approx$& $ 1.488\,924\,847\,889  \cdot 10^{1798}$& $2.977\,849\,695\,779  \cdot 10^{1798}$ \cr
\hline
\end{tabular}
\label{eq:cmjsPjts}
\end{equation}

Next, we give some examples; each of them is based on \eqref{eq:zkRspHjTKc}, Observation \ref{obs:nthNmb}, and one of the computational tables that will be specified.

\begin{example}\label{example:ABCDE}
\textup{(A)} By \eqref{eq:szbklhrm}, $\gmin{\FD 3^{30\,000}}=20$. That is, the direct power  $\FD 3^{30\,000}$ can be generated by 20 elements but not by 19.

\textup{(B)} By \eqref{eq:szbklngy}, $\gmin{\FD 4^{20\,000}}$ is either 20 or 21 but we do not know which one.

\textup{(C)}  By \eqref{eq:szbklot}, $\gmin{\FD 5^{25\,000}}=22$.

\textup{(D)}   By \eqref{eq:cSmrWlKmlG}, $\gmin{\FD 3^{10^{88}}}=300$ (the exponent in the direct power is $10^{88}$).

\textup{(E)}    By \eqref{eq:cmjsPjts}, $\gmin{\FD {20}^{1.489\cdot 10^{1798}}}=6\,000$ (the exponent is $1.489\cdot 10^{1\,798}$).
\end{example}

We know from A000372  of Sloan \cite{sloan} (\href{https://oeis.org/A000372}{https://oeis.org/A000372}) that in spite of lots of work by many contributors, the largest integer $r$ for which $|\FD r|$ is known is $r=9$. 
%Note that apart from its zeroth element, A000372 is the sequence  $2+|\FD{1}|$, $2+|\FD{2}|$ , $2+|\FD{3}|$,\dots{}. It is unlikely that $|\FD{20}|$ gets known in a century. 
We mention the following well-known folkloric lower fact:
\begin{equation}
2^{1024} = 2^{2^{10}} \leq |\FD{20}|.
\label{eq:zrTrnmsLtmg}
\end{equation}
Indeed, the free Boolean lattice $\textup{FB}(10)$ on 10 generators consists of $2^{2^{10}}$ elements and it  is lattice-generated by the free generators of $\textup{FB}(10)$ and their complements. So  $\textup{FB}(10)$ as a distributive lattice is generated  by 20 elements,  implying \eqref{eq:zrTrnmsLtmg}. 

Based on \eqref{eq:zrTrnmsLtmg} and the paragraph above, the direct power in part (E) of Example \ref{example:ABCDE} consists of an unknown but very large number of elements. However, only 306 seconds were needed to determine the least possible size of its generating sets.

\section{Appendix: a Maple worksheet}\label{sect:maple}

We obtained the data in Section \ref{sect:odd} with the use of the computer algebraic program Maple V Release 5 (1997) and a desktop computer (AMD Ryzen 7 2700X Eight-Core Processor 3.70 GHz); 
the whole computation took  7  hours and 16 minutes. The author's Maple program (called a \emph{worksheet} in Maple) is the following.

{\small%\SMALL
\begin{verbatim}
> restart;# A satellite worksheet for  arXiv:2309.13783
> # Decrease the parameters in the #***-marked lines when
> # experiencing with the program or running only a part of the
> # program; otherwise it may run for more than a day.
> alltime0:=time():
> #                             PART A: LOWER BOUNDS
> #                   Part A/1: how to choose p in the theorem
> fpr0r:=proc(p,r,n) local h,s,i,j; h:=p+floor((n-r)/2): s:=0:
>  for i from 0 to floor(n/r)-1 do
>   for j from 0 to i do s:=s+binomial(i,j)*binomial(n-(i+1)*r, h-j*r)
>   od;
>  od;  s:=s;#Since each procedure returns with the last ":="
> end: #end of procedure fpr0r
> findp:=proc(r,n) local p,pmax,s,smax,i; pmax:=-r; smax:=-1;
>  for p from -r to r do s:=fpr0r(p,r,n);
>   if s>smax then smax:=s; pmax:=p
>   fi;
>  od; 
>  if pmax<> 0 then print(`When r=`,r,` n=`,n,` then pmax=`,pmax);
>  fi;            pmax:=pmax;
> end: #end of procedure findp
> # Checking what we guess: p=0 is the best choice
> time0:=time():
> for r from 3 to 200 do #*** 
>  for n from r to 300 do #n from r to 100
>   pfound:=findp(r,n); if pfound<>0 then print(pfound);
>                       fi;
>  od;
> od;          time1:=time():
> print(`Checking that p=0 is the best took `, time1-time0,` seconds.`);
> #    As p=0 proved to be the best, now we can let
> fr0r:=proc(r,n); fpr0r(0,r,n) 
> end: #end of procedure fr0r
> 
> #                             PART B: UPPER BOUNDS
> #                   Part B/1: an easy (but not the best) upper bound
> gr:=proc(r,n) local s; s:=floor(binomial(n+2-r,floor((n+2-r)/2))/2); 
> end:#end of procedure gr
> #         Part B/2/a: where does f33 takes its minimum?
> f33:=proc(n,t,x,y);(t+x)!*(n-t-x)! + (t+y)!*(n-t-y)!
>   + 2*(t+x+y)!*(n-t-x-y)! -2*(t+x)!*y!*(n-t-x-y)!
>   -2*(t+y)!*x!*(n-t-x-y)!
> end: #end of procedure f33   
> time0:=time():
> for n from 3 to 300 do #***
  s0:=f33(n,0,1,1);t0:=floor((n-2)/2);x0:=1;y0:=1;   
  for t from 0 to n-3 do
   for x from 1 to n-t-2 do
    for y from x to n-t-x-1 do # x<=y by symmetry! 
     s:=f33(n,t,x,y); if s<s0 then s0:=s; t0:=t; x0:=x; y0:=y
                      fi
    od
   od
  od; if t0+1<n-3 then if f33(n,t0+1,x0,y0)=s0 then t0:=t0+1 
                       fi 
      fi;
  print(`For n=`,n,` t0=`,t0,` x0=`,x0,` y0=`,y0);#optional
  if x0<>1 or y0<>1 or t0<>floor((n-2)/2) then
   print(`FAILURE, FAILURE, FAILURE, FAILURE for n=`,n);
  fi; 
od:
time1:=time():
> print(`Checking where f33 takes its minimum took `, time1-time0,` seconds.`);
> #         Part B/2/b: computing g3asterix 
> # We compute g3asterix(n) only for those n for which we know that it is
> # the same as g3doubleasterix(n). I.e., only for n<=300.  
> g3asterix:=proc(n) local s0,s,t0,t,x,x0,y,y0,Mn,result;
>  s0:=f33(n,0,1,1);t0:=floor((n-2)/2);x0:=1;y0:=1;   
>  for t from 0 to n-3 do
>   for x from 1 to n-t-2 do
>    for y from x to n-t-x-1 do # x<=y by symmetry! 
>     s:=f33(n,t,x,y); if s<s0 then s0:=s; t0:=t; x0:=x; y0:=y
>                      fi
>    od #for t
>   od #for y
>  od; if t0+1<n-3 then if f33(n,t0+1,x0,y0)=s0 then t0:=t0+1 
>                       fi 
>      fi;
>  if x0<>1 or y0<>1 or t0<>floor((n-2)/2) then
>   print(`FAILURE, FAILURE, FAILURE, FAILURE for n=`,n);
>  else #the conjecture holds for n
>   Mn:=3*floor(n/2)!*ceil(n/2)! + 3*floor((n+2)/2)!*ceil((n-2)/2)! 
>      -6*floor(n/2)!*ceil((n-2)/2)!; result:=floor(n!/Mn);
>  fi;
> end: #End of procedure g3asterix 
> #          Part B/2/c: computing g3doubleasterix(n), 
> # that is, computing g3asterix(n) for n<= 300 FAST.
> g3doubleasterix:=proc(n); 
>  if n>300 then print(`n is too large at present`)
>  else
>   floor(n! / (3*floor(n/2)!*ceil(n/2)!   
>     + 3*floor((n+2)/2)!*ceil((n-2)/2)! -6*floor(n/2)!*ceil((n-2)/2)!))
>  fi;
> end: #end of procedure g3doubleasterix 
> #                             PART C: CHECKING SEPARATION 
> #                   Part C/1: checking that (f303,g3) is separating
> time0:=time(): Print(`Checking that (f303,g3) is separating`);
> for n from 3 to 300 do #***  
>   big:=fr0r(3,n+1):small:=gr(3,n); :quoti:=-1; 
>   if small>0 then quoti:=evalf(big /small);
>   fi:   print(`Hopefully nonnegative=`, big-small, 
>               ` quotient=`,quoti, `  n=`,n ); 
od:
time1:=time(): scn:=time1-time0;
> print(`Checking (f303,g3) is separating took `, scn,` seconds.`);
> #                   Part C/2: checking that (fr0r,gr) is separating
> # for some values of r and n
> time0:=time():
> for r from 3 to 100 do #*** 
>  #print(` `);
>  #print(`Checking that (fr0r,gr) is separating for r=`,r);#optional
>  for n from r to 300 do big:=fr0r(r,n+1):small:=gr(r,n);#*** 
>   quoti:=-1; if small>0 then quoti:=evalf(big /small);
>            fi:  
>   # print(`Hopefully nonnegative=`, big-small, 
>   # ` quotient=`, quoti, `   r=`,r,` n=`,n ); 
>   if big<small then
>    for j from 1 to 100 do 
>      print(`!!! Non-separating for r=`,r,` n=`,n)
>    od; #One can search for "!!! Non-separating" in the output.  
>   fi; 
>  od; #for n
> od:       time1:=time(): scn:=time1-time0:
> print(`Checking (fr0r,gr) is separating took `,scn,` seconds.`);
> 
> #                             Part D: computing some small values
> time0:=time():
> for r from 3 to 1 do #to 22 
>  print(` `); print(`r=`,r);
>  for n from r to 30  do frn:=fr0r(r,n); grn:=gr(r,n);
   if r=3 
   then ggood:=g3doubleasterix(n); print(`r=`,r,
    ` n=`,n,` f303=`,frn,` ggood=`,ggood, ` g3=`,grn);
   else print(`r=`,r,` n=`,n,` fr0r=`,frn,` grn`,grn); 
   fi
 od; #for n
od:  time1:=time(): scn:=time1-time0:
> print(`Computing some small values took `,scn,` seconds.`);
> 
> #                             Part E: computing some large values
> time0:=time(): print(` `); print(`r=`,3);
> for n from 298 to 300 do print(` `); #***  
>  frn:=fr0r(3,n): ggood:=g3doubleasterix(n): grn:=gr(3,n):
>  quoti:=-1; if frn>0 then :quoti:=evalf(ggood/frn):
>           fi: 
>  print(`r=`,3,` n=`,n, ` log[10](frn)=`,evalf(log[10](frn)),
>    ` log[10](ggood)=`,evalf(log[10](ggood)), 
>    ` log[10](grn)=`,evalf(log[10](grn)), ` quotient=`, quoti):
> od;  time1:=time(): scn:=time1-time0:
> print(`Computing some large values took `,scn,` seconds.`):
> 
> #                             Part F: computing some larger values
> time0:=time():r:=20:  print(` `): print(`r=`,r):
> for n from 5999 to 6000 do print(` `);#***   
>  frn:=fr0r(r,n); grn:=gr(r,n); 
>  print(`r=`,r,` n=`,n,` frn=`,frn, ` grn=`, grn,` log[10](frn)=`,
>        evalf(log[10](frn)), ` log[10](grn)=`,evalf(log[10](grn))):
> od:
> time1:=time(): scn:=time1-time0:
> print(`Computing some larger values took `,scn,` seconds.`):
> alltime1:=time(): ascn:=alltime1-alltime0:
> print(`The whole program ran `,ascn,` seconds, i.e. `,
>   evalf(ascn/3600),` hours.`):
\end{verbatim}
}
This worksheet is also available from the \href{http://www.math.u-szeged.hu/~czedli}{author's website}.

\end{document}